\documentclass[a4paper,10pt]{article}

\usepackage{graphicx}
\DeclareGraphicsExtensions{.pdf}
\graphicspath{{pics//}}

\usepackage{amsmath,amssymb}
\usepackage{enumerate,enumitem}
\usepackage{MnSymbol}
\usepackage{nth}
% \usepackage{todonotes}

% THEOREMS -------------------------------------------------------

\usepackage{amsthm}

\theoremstyle{plain}
\newtheorem{theorem}{Theorem}
\newtheorem{lemma}[theorem]{Lemma}
\newtheorem{question}[theorem]{Question}
\newtheorem{proposition}[theorem]{Proposition}

% COMMANDS -------------------------------------------------------
\newcommand{\bo}{\ensuremath{\operatorname{box}}}
\newcommand{\barbo}{\ensuremath{\overline{\operatorname{box}}}}
\newcommand{\calC}{\ensuremath{\mathcal{C}}}
\newcommand{\calI}{\ensuremath{\mathcal{I}}}
\newcommand{\barC}{\ensuremath{\overline{\mathcal{C}}}}
\newcommand{\calG}{\ensuremath{\mathcal{G}}}

\newcommand{\N}{\ensuremath{\mathbb{N}}}
\newcommand{\R}{\ensuremath{\mathbb{R}}}

% \let\doendproof\endproof
% \renewcommand\endproof{~\hfill\qed\doendproof}

%opening
\title{Local and Union Boxicity}

%%%%%%%%%%%%%%%%%%%%%%%%%%%%%%%%%%%%%%%%%%%%%%%%%%%%%%%%%%%%%%%%%

\author{Thomas Bl\"asius, Peter Stumpf and Torsten Ueckerdt}
% \author{Thomas Bl\"asius$^1$ \and Peter Stumpf$^2$ \and Torsten Ueckerdt$^2$}

% \institute{$^1$ Hasso Plattner Institute, Potsdam\\
% $^2$ Karlsruhe Institute of Technology, Karlsruhe}

\begin{document}

\maketitle

\begin{abstract}
  %%%% old version %%%
%   The boxicity $\bo(H)$ of a graph $H$ is the smallest integer $d$
%   such that $H$ is the intersection of $d$ interval graphs, or
%   equivalently, that $H$ is the intersection graph of axis-aligned
%   boxes in $\R^d$. Interpreting these intersection representations as
%   covering representations of the complement $H^c$ of $H$, we follow
%   the recent framework of global, local and folded covering numbers
%   (Knauer and Ueckerdt, \textit{Discrete Mathematics} \textbf{339} (2016)) to define two new
%   parameters: the local boxicity $\bo_\ell(H)$ and the union boxicity
%   $\barbo(H)$ of $H$.

  %%% new version %%%
  The boxicity $\bo(H)$ of a graph $H$ is the smallest integer $d$
  such that $H$ is the intersection of $d$ interval graphs, or
  equivalently, that $H$ is the intersection graph of axis-aligned
  boxes in $\R^d$.
  These intersection representations can be interpreted as covering representations of the complement $H^c$ of $H$ with co-interval graphs, that is, complements of interval graphs.  
  We follow the recent framework of global, local and folded covering numbers
  (Knauer and Ueckerdt, \textit{Discrete Mathematics} \textbf{339} (2016)) to define two new
  parameters: the local boxicity $\bo_\ell(H)$ and the union boxicity
  $\barbo(H)$ of $H$.
  The union boxicity of $H$ is the smallest $d$ such that $H^c$ can be covered with $d$ vertex-disjoint unions of co-interval graphs, while the local boxicity of $H$ is the smallest $d$ such that $H^c$ can be covered with co-interval graphs, at most $d$ at every vertex.
 
%  We show that for every graph $H$ we have $\bo_\ell(H) \leq \barbo(H) \leq \bo(H)$ and that each of these inequalities can be attained with equality but can also be arbitrarily far apart.
 We show that for every graph $H$ we have $\bo_\ell(H) \leq \barbo(H) \leq \bo(H)$ and that each of these inequalities can be arbitrarily far apart.
 Moreover, we show that local and union boxicity are also characterized by intersection representations of appropriate axis-aligned boxes in $\R^d$.
 We demonstrate with a few striking examples, that in a sense, the local boxicity is a better indication for the complexity of a graph, than the classical boxicity.
\end{abstract}

%%%%%%%%%%%%%%%%%%%%%%%%%%%%%%%
%%       INTRODUCTION        %%
%%%%%%%%%%%%%%%%%%%%%%%%%%%%%%%
\section{Introduction}\label{sec:introduction}

An \emph{interval graph} is an intersection graph of intervals on the real line\footnote{Throughout, we shall just say ``intervals'' and drop the suffix ``on the real line''.}.
Such a set $\{I(v) \subseteq \R \mid v \in V(H)\}$ of intervals with $vw \in E(H) \Leftrightarrow I(v) \cap I(w) \neq \emptyset$ is called an \emph{interval representation of $H$}.
A box in $\R^d$, also called a \emph{$d$-dimensional box}, is the Cartesian product of $d$ intervals.
The \emph{boxicity} of a graph $H$, denoted by $\bo(H)$, is the least integer $d$ such that $H$ is the intersection graph of $d$-dimensional boxes, and a corresponding set $\{B(v) \subseteq \R^d \mid v\in V(H)\}$ is a \emph{box representation of $H$}.
The boxicity was introduced by Roberts~\cite{boxicity-intro} in 1969 and has many applications in as diverse areas as ecology and operations research~\cite{CR-83}.

As two $d$-dimensional boxes intersect if and only if each of the $d$
corresponding pairs of intervals intersect, we have the following more
graph theoretic interpretation of the boxicity of a graph; also see Figure~\ref{fig:covering}(a).

% \begin{theorem}[? \cite{?}]
%  For a graph $G$ we have $\bo(G) \leq d$ if and only if $G$ is the intersection graph of $d$-dimensional boxes.
% \end{theorem}

\begin{theorem}[Roberts~\cite{boxicity-intro}]
 For a graph $H$ we have $\bo(H) \leq d$ if and only if $H = G_1 \cap \cdots \cap G_d$ for some interval graphs $G_1,\ldots,G_d$.
\end{theorem}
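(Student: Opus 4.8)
The plan is simply to unwind the two definitions --- boxicity on one side, graph intersection on the other --- using the elementary fact already observed above: two axis-aligned boxes $B = I_1 \times \cdots \times I_d$ and $B' = I'_1 \times \cdots \times I'_d$ in $\R^d$ intersect if and only if $I_j \cap I'_j \neq \emptyset$ for every coordinate $j \in \{1,\ldots,d\}$.

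For the ``only if'' direction I would start from a box representation $\{B(v) \subseteq \R^d \mid v \in V(H)\}$ witnessing $\bo(H) \le d$ (first padding with constant intervals in any unused coordinates, so that it uses exactly $d$ of them), write $B(v) = I_1(v) \times \cdots \times I_d(v)$, and let $G_j$ be the interval graph on vertex set $V(H)$ with interval representation $\{I_j(v) \mid v \in V(H)\}$. The chain of equivalences $uv \in E(H) \iff B(u)\cap B(v)\neq\emptyset \iff (\forall j)\, I_j(u)\cap I_j(v)\neq\emptyset \iff (\forall j)\, uv \in E(G_j) \iff uv \in E(G_1 \cap \cdots \cap G_d)$ then yields $H = G_1 \cap \cdots \cap G_d$.

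For the ``if'' direction I would reverse this reasoning: given $H = G_1 \cap \cdots \cap G_d$ with each $G_j$ an interval graph on the common vertex set $V(H)$, fix an interval representation $\{I_j(v) \mid v \in V(H)\}$ of each $G_j$ and set $B(v) := I_1(v) \times \cdots \times I_d(v) \subseteq \R^d$; the same chain of equivalences read backwards shows that $\{B(v) \mid v \in V(H)\}$ is a box representation of $H$ in $\R^d$, hence $\bo(H) \le d$.

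I do not expect any real obstacle here, since the statement is essentially a definitional translation. The only two points worth stating carefully are the normalization that a box representation of dimension $d' < d$ extends to one of dimension exactly $d$ (pad with a fixed interval such as $[0,1]$ in each new coordinate, which creates no new non-adjacencies), and the convention that the intersection $G_1 \cap \cdots \cap G_d$ is formed over graphs sharing the vertex set $V(H)$ --- which is in any case forced by the one-box-per-vertex nature of a box representation.
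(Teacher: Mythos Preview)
Your argument is correct and is exactly the standard proof of Roberts' theorem: project a $d$-dimensional box representation onto each coordinate to obtain $d$ interval graphs whose intersection is $H$, and conversely take the Cartesian product of $d$ interval representations to recover a box representation. The two minor caveats you flag (padding to exactly $d$ coordinates, and that the $G_j$ share the common vertex set $V(H)$) are the only points requiring care, and you handle them.

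There is nothing to compare against, however: the paper does not supply its own proof of this statement. It is quoted as a classical result of Roberts with a citation~\cite{boxicity-intro} and used as a black box thereafter. So your write-up stands on its own rather than as an alternative to anything in the paper.
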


I.e., the boxicity of a graph $H$ is the least integer $d$ such that $H$ is the intersection of some $d$ interval graphs.
For a graph $H = (V,E)$ we denote its complement by $H^c = (V, \binom{V}{2} - E)$.
Then by De Morgan's law we have
\begin{equation}
 H = G_1 \cap \cdots \cap G_d \quad \Longleftrightarrow \quad H^c = G_1^c \cup \cdots \cup G_d^c,\label{eq:DeMorgan}
\end{equation}
i.e., $\bo(H)$ is the least integer $d$ such that the complement $H^c$ of $H$ is the union of $d$ co-interval graphs $G_1^c,\ldots,G_d^c$, where a \emph{co-interval graph} is the complement of an interval graph\footnote{Equivalently, these are the comparability graphs of interval orders.}.
In other words, $\bo(H) \leq d$ if $H^c$ can be covered with $d$ co-interval graphs.
Strictly speaking, we have to be a little more precise here.
In order to use De Morgan's law, we should guarantee that $G_1,\ldots,G_d$ in~\eqref{eq:DeMorgan} all have the same vertex set.
To this end, if $G$ is a subgraph of $H$, let $\bar{G} = (V(H),E(G))$ be the graph obtained from $G$ by adding all vertices in $V(H) - V(G)$ as isolated vertices.
(Whenever we use $\bar{G}$ it will be clear from the context which supergraph $H$ of $G$ we consider.)
Clearly we have
\[
 H^c = G_1^c \cup \cdots \cup G_d^c \quad \Rightarrow \quad H^c = \bar{G}_1^c \cup \cdots \cup \bar{G}_d^c \quad \Rightarrow \quad H = \bar{G}_1 \cap \cdots \cap \bar{G}_d
\]
for any graph $H$ and any subgraphs $G_1,\ldots,G_d$ of $H$.
Now whenever $G$ is a co-interval graph, then so is $\bar{G}$, implying that $\bo(H)$ is the least integer $d$ such that $H^c$ can be covered with $d$ co-interval graphs.

%%%%%%%%%%%%%%%%%%%%%%%%%%%%%%
%% general covering numbers %%
%%%%%%%%%%%%%%%%%%%%%%%%%%%%%%
\paragraph{Graph covering parameters.}
In the general graph covering problem one is given an input graph $H$, a so-called covering class $\calG$ and a notion of how to cover $H$ with one or more graphs from $\calG$.
The most classic notion of covering, which also corresponds to the boxicity as discussed above, is that $H$ shall be the union of $G_1,\ldots,G_t \in \calG$, i.e., $V(H) = \bigcup_{i \in [t]} V(G_i)$ and $E(H) = \bigcup_{i \in [t]} E(G_i)$.
(Here and throughout the paper, for a positive integer $t$ we denote $[t] = \{1,\ldots,t\}$.)
The \emph{global covering number}, denoted by $c_g^{\calG}(H)$, is then defined to be the minimum $t$ for which such a cover exists.
Many important graph parameters can be interpreted as a global covering number, e.g., the arboricity~\cite{Nas-64}, the track number~\cite{Gya-95} (this is not the track-number as defined in~\cite{DMW-05}) and the thickness~\cite{Bei-69,Mut-98}, just to name a few.

Most recently, Knauer and Ueckerdt~\cite{KU16} suggested the following unifying framework for three kinds of covering numbers, differing in the underlying notion of covering.
A \emph{graph homomor\-phism} is a map $\varphi: V(G) \to V(H)$ with the property that if $uv \in E(G)$ then $\varphi(u)\varphi(v) \in E(H)$, i.e., $\varphi$ maps vertices of $G$ (not necessarily injectively) to vertices of $H$ such that edges are mapped to edges.
For abbreviation we shall simply write $\varphi : G \to H$ instead of $\varphi: V(G) \to V(H)$.
For an input graph $H$, a covering class $\calG$ and a positive integer $t$, a \emph{$t$-global $\calG$-cover of $H$} is an edge-surjective homomor\-phism $\varphi: G_1 \cupdot \cdots \cupdot G_t \to H$ such that $G_i \in \calG$ for each $i \in [t]$.
Here $\cupdot$ denotes the vertex-disjoint union of graphs.
We say that $\varphi$ is \emph{injective} if its restriction to $G_i$ is injective for each $i \in [t]$.
A $\calG$-cover is called \emph{$s$-local} if $|\varphi^{-1}(v)| \leq s$ for every $v \in V(H)$.

Hence, if $\varphi$ is a $\calG$-cover of $H$, then
\begin{itemize}[label=]
 \item $\varphi$ is $t$-global if it uses only $t$ graphs from the covering class $\calG$, 
 \item $\varphi$ is injective if $\varphi(G_i)$ is a copy of $G_i$ in $H$ for each $i \in [t]$,
 \item $\varphi$ is $s$-local if for each $v \in V(H)$ at most $s$ vertices are mapped onto $v$.
\end{itemize}

\begin{figure}[tb]
  \centering
  \includegraphics[page=2]{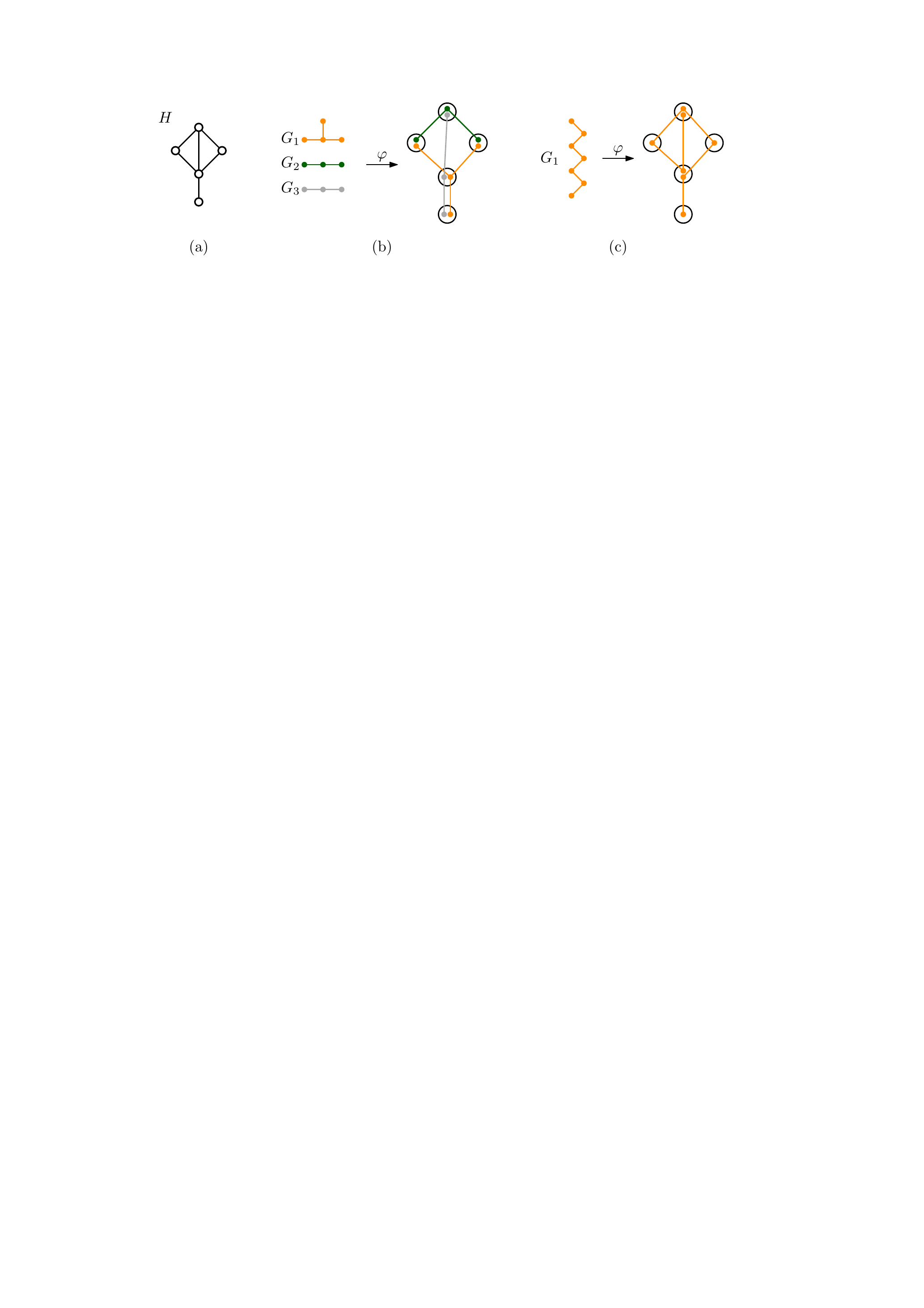}
  \caption{(a)~The 4-cycle as the intersection of two interval graphs.
    (b)~Example graph~$H$.  (c)~An injective covering of $H$ that is
    3-global and 2-local.  (d)~A (non-injective) 1-global 2-local
    covering of $H$.}
  \label{fig:covering}
\end{figure}

For a covering class $\calG$ and an input graph $H$ the \emph{global
  covering number} $c_{g}^{\calG}(H)$, the \emph{local covering
  number} $c_{\ell}^{\calG}(H)$, and the \emph{folded covering number}
$c_{f}^{\calG}(H)$ are then defined as follows; see also
Figure~\ref{fig:covering}(b)--(d):
 \begin{itemize}[label=]
  \item $c_{g}^{\calG}(H) = \min\left\{ t : \text{there exists a $t$-global injective }\calG\text{-cover of }H\right\}$
  \item $c_{\ell}^{\calG}(H) = \min\left\{ s : \text{there exists an $s$-local injective }\calG\text{-cover of }H\right\}$
  \item $c_{f}^{\calG}(H) = \min\left\{ s : \text{there exists a $1$-global $s$-local }\calG\text{-cover of }H\right\}$
 \end{itemize}

Intuitively speaking, for $c_\ell^{\calG}(H)$ we want to represent the input graph $H$ as the union of graphs from the covering class $\calG$, where the number of graphs we use is not important.
Rather we want to ``use'' each vertex of $H$ in only few of these subgraphs.
For $c_f^{\calG}(H)$ it is convenient to think of the ``inverse'' mapping for $\varphi$.
If $\varphi: G_1 \to H$ is a $1$-global $\calG$-cover of $H$, then the preimage under $\varphi$ of a vertex $v \in V(H)$ is an independent set $S_v$ in $G_1$.
Moreover, for every $u,v \in V(H)$ we have $uv \in E(H)$ if and only if there is at least one edge between $S_u$ and $S_v$ in $G_1$.
So $G_1$ is obtained from $H$ by a series of \emph{vertex splits}, where splitting a vertex $v$ into an independent set $S_v$ is such that for each edge $vw$ incident to $v$ there is at least one edge between $w$ and $S_v$ after the split.
Now $c_f^{\calG}(H)$ is the smallest $s$ such that each vertex can be split into at most $s$ vertices so that the resulting graph $G_1$ lies in the covering class $\calG$.

It is known, that if the covering class $\calG$ is closed under certain graph operations, we can deduce inequalities between the folded, local and global covering numbers.
For a graph class $\calG$ we define the following.

\begin{itemize}%[label=]
 \item $\calG$ is \emph{homomor\-phism-closed} if for any connected $G \in \calG$ and any homomor\-phism $\varphi:G \to H$ into some graph $H$ we have that $\varphi(G) \in \calG$.
 \item $\calG$ is \emph{hereditary} if for any $G \in \calG$ and any induced subgraph $G'$ of $G$ we have that $G' \in \calG$.
 \item $\calG$ is \emph{union-closed} if for any $G_1,G_2 \in \calG$ we have that $G_1 \cupdot G_2 \in \calG$.
\end{itemize}

% A graph class $\calG$ is said to be \emph{closed under homomor\-phism} if for any connected $G \in \calG$ and any homomor\-phism $\varphi:G \to H$ into some graph $H$ we have that $\varphi(G) \in \calG$.
% Moreover, $\calG$ is \emph{hereditary} if for any $G \in \calG$ and any induced subgraph $H$ of $G$ we have that $H \in \calG$.

\begin{proposition}[Knauer-Ueckerdt~\cite{KU16}]\label{prop:covering-numbers-inequalities}
 For every input graph $H$ and every covering class $\calG$ we have
 \begin{enumerate}[label=(\roman*)]
  \item $c_\ell^{\calG}(H) \leq c_g^{\calG}(H)$, and if $\calG$ is union-closed, then $c_f^{\calG}(H) \leq c_\ell^{\calG}(H)$,
  
  \item if $G$ is hereditary and homomor\-phism-closed, then $c_f^{\calG}(H) \geq c_\ell^{\calG}(H)$.
 \end{enumerate}
\end{proposition}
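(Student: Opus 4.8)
The plan is to handle the three inequalities one at a time, each reducing to a short rewriting of covers. For $c_\ell^{\calG}(H)\le c_g^{\calG}(H)$ I would take an optimal $t$-global injective $\calG$-cover $\varphi\colon G_1\cupdot\cdots\cupdot G_t\to H$; since $\varphi$ is injective on each $G_i$, every vertex of $H$ has at most one preimage in each $G_i$ and hence at most $t$ preimages in total, so the same $\varphi$ is already a $t$-local injective $\calG$-cover. For the second part of (i), assume $\calG$ is union-closed and take an optimal $s$-local injective $\calG$-cover $\varphi\colon G_1\cupdot\cdots\cupdot G_t\to H$. Iterating union-closedness gives $G_1\cupdot\cdots\cupdot G_t\in\calG$, so $\varphi$, viewed as a single homomor\-phism from this one graph, is a $1$-global $\calG$-cover that is still $s$-local; injectivity is simply dropped. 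Thus $c_f^{\calG}(H)\le s$.

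The interesting direction is (ii): $c_f^{\calG}(H)\ge c_\ell^{\calG}(H)$ when $\calG$ is hereditary and homomor\-phism-closed. Here I would start from an optimal $1$-global $s$-local $\calG$-cover $\varphi\colon G_1\to H$ with $G_1\in\calG$, and let $C_1,\ldots,C_k$ be the connected components of $G_1$. Each $C_j$ is an induced subgraph of $G_1$, hence $C_j\in\calG$ by heredity, and $C_j$ is connected, hence its image $\varphi(C_j)$ is in $\calG$ by homomor\-phism-closedness. The new cover is obtained by forgetting $\varphi$ and instead covering $H$ with the subgraphs $\varphi(C_1),\ldots,\varphi(C_k)$ along the inclusion maps: their vertex-disjoint union $\psi\colon \varphi(C_1)\cupdot\cdots\cupdot\varphi(C_k)\to H$ is an injective $\calG$-cover, and it is edge-surjective because $\varphi$ is and every edge of $G_1$ sits inside some component. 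Finally one checks $s$-locality: for $v\in V(H)$ the number of pieces $\varphi(C_j)$ that contain $v$ equals the number of components $C_j$ meeting the fiber $\varphi^{-1}(v)$, and since the components partition $V(G_1)$ this is at most $|\varphi^{-1}(v)|\le s$. Hence $c_\ell^{\calG}(H)\le s=c_f^{\calG}(H)$.

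I expect the only genuine obstacle to be the non-injectivity of $\varphi$ restricted to a component $C_j$ in part (ii): one cannot simply reuse $\varphi|_{C_j}$ as a piece of an injective cover. The resolution is to pass from $C_j$ to the image graph $\varphi(C_j)\subseteq H$ — which is exactly where the homomor\-phism-closed hypothesis enters — and to re-cover by inclusions; after that, verifying that the new cover inherits $s$-locality from $\varphi$ is a one-line counting argument using that the fibers of $\varphi$ have size at most $s$ and meet at most that many components.
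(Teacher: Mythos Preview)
The paper does not actually prove this proposition; it is quoted from Knauer and Ueckerdt~\cite{KU16} and used as a black box. Your argument is correct and is essentially the standard proof: the first two inequalities are immediate reinterpretations of a given cover, and for (ii) you correctly split $G_1$ into connected components, apply heredity to get each $C_j\in\calG$, apply homomorphism-closedness to get $\varphi(C_j)\in\calG$, and then re-cover $H$ injectively by inclusions while bounding locality via the fiber sizes of $\varphi$. There is nothing to compare against in the present paper.
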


%%%%%%%%%%%%%%%%%%%%%%%%%%
%% applying to boxicity %%
%%%%%%%%%%%%%%%%%%%%%%%%%%
\paragraph{Boxicity variants.}
Let us put the boxicity into the graph covering framework by Knauer and Ueckerdt~\cite{KU16} as described above.
To this end, let $\calC$ denote the class of all co-interval graphs.
Then we have $\bo(H) = c_g^{\calC}(H^c)$ and we can investigate the new parameters
\[
 \bo_f(H) := c_f^{\calC}(H^c) \quad \text{and} \quad \bo_\ell(H) := c_\ell^{\calC}(H^c).
\]
Clearly, if $H$ is an interval graph, i.e., $H^c \in \calC$, then $\bo_f(H) = \bo_\ell(H) = \bo(H) = 1$.
As it turns out, if $H$ is not an interval graph, then $\bo_f(H)$ is not very meaningful.

\begin{theorem}\label{thm:box-f-meaningless}
 For every graph $H$ we have $\bo_f(H) = 1$ if $H^c \in \calC$ and $\bo_f(H) = \infty$ otherwise.
\end{theorem}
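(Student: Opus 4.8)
The plan is to treat the two cases of the statement separately, with all the real work in the second. If $H^c \in \calC$, then the identity map on $H^c$ is an edge-surjective homomorphism, it is $1$-global because $H^c \in \calC$, and it is $1$-local, so $\bo_f(H) = c_f^{\calC}(H^c) \le 1$; since $c_f^{\calC}(H^c) \ge 1$ as soon as $H$ has a vertex, this gives $\bo_f(H) = 1$. The content is therefore the converse: if $H^c \notin \calC$, then $\bo_f(H) = \infty$. For this it suffices to show that there is \emph{no} $1$-global $\calC$-cover of $H^c$ at all, of any locality, since then the minimum defining $\bo_f(H)$ is over the empty set. In other words, I will prove the contrapositive: if $\varphi\colon G_1 \to H^c$ is any edge-surjective homomorphism with $G_1 \in \calC$, then already $H^c \in \calC$. (Equivalently: the class $\calC$ of co-interval graphs is closed under edge-surjective homomorphic images.) I would prove this directly via the Helly property of intervals; a point worth stressing in the write-up is that the locality $s = \max_{v} |\varphi^{-1}(v)|$ plays no role whatsoever in the argument, which is precisely why $\bo_f$ can only take the values $1$ and $\infty$.

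Concretely, I would set $G := G_1^c$, which is an interval graph, and fix an interval representation $\{I(x) \subseteq \R \mid x \in V(G)\}$ using closed bounded intervals. For each $v \in V(H)$ let $C_v := \varphi^{-1}(v)$. Since $H^c$ has no loops, $C_v$ is an independent set of $G_1$, hence a clique of $G$, so by the Helly property of intervals on the line the interval $J(v) := \bigcap_{x \in C_v} I(x)$ is nonempty. (If some $C_v$ is empty, then $v$ is isolated in $H^c$ --- any $H^c$-edge at $v$ would have to be covered by $\varphi$ --- and I would first add to $G_1$ a new isolated vertex mapped to $v$; this keeps $G_1$ in $\calC$ since on the complement side it amounts to adding a universal vertex to the interval graph $G$, which remains an interval graph. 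So I may assume every $C_v$ is nonempty.)

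The last step is to check that $\{J(v) \mid v \in V(H)\}$ is an interval representation of $H$; this makes $H$ an interval graph, hence $H^c \in \calC$, contradicting the assumption. Fix distinct $u,v \in V(H)$. Because $\varphi$ is an \emph{edge-surjective} homomorphism, $uv \in E(H^c)$ holds if and only if $G_1$ has an edge between $C_u$ and $C_v$; equivalently, $uv \in E(H)$ if and only if every pair in $C_u \times C_v$ is an edge of $G$, i.e.\ if and only if $C_u \cup C_v$ is a clique of $G$. If $C_u \cup C_v$ is a clique of $G$, then by Helly the intervals $I(x)$, $x \in C_u \cup C_v$, share a common point $p$, and then $p \in J(u) \cap J(v)$; conversely, any $p \in J(u) \cap J(v)$ lies in every $I(x)$ with $x \in C_u \cup C_v$, so these intervals pairwise intersect and $C_u \cup C_v$ is a clique of $G$. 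Hence $J(u) \cap J(v) \neq \emptyset$ if and only if $uv \in E(H)$, as required. I expect the only genuinely fiddly points to be the bookkeeping for empty preimages and the distinction between edge-surjectivity and vertex-surjectivity of $\varphi$ (it is the edge-surjectivity that supplies the forward direction of the equivalence above); the Helly computation itself is entirely routine.
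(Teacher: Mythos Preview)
Your argument is correct. The paper takes essentially the same route, but packaged differently: it first isolates a general lemma (Lemma~\ref{lem:identifying-non-adjacent-vertices}) saying that if a class $\calG$ is closed under identifying a \emph{single} pair of non-adjacent vertices, then $c_f^{\calG}(H) < \infty$ forces $H \in \calG$; it then verifies this closure property for $\calC$ (Lemma~\ref{lem:class-properties}\ref{enum:calC}) via exactly your Helly computation, but for two vertices at a time. So the paper proves the one-step version and iterates, while you collapse all identifications at once by setting $J(v) = \bigcap_{x\in C_v} I(x)$ and invoking Helly on the full clique $C_u \cup C_v$. The mathematical content is the same; the paper's factorization has the advantage of extracting a reusable principle (closure under non-adjacent identification $\Rightarrow$ $c_f$ is trivial), whereas your direct argument is self-contained and avoids the induction on $|V(G_1)|$. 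Your handling of empty fibres by adjoining universal vertices on the interval side is correct and slightly more explicit than what the paper does.
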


Basically, Theorem~\ref{thm:box-f-meaningless} says that if $H^c$ is not a co-interval graph, there is no way to obtain a co-interval graph from $H^c$ by vertex splits.
For example, if $H$ has an induced $4$-cycle and hence $H^c$ has two independent edges, then $H^c \notin \calC$ and whatever vertex splits are applied, the result will always have two independent edges, i.e., not be a co-interval graph.
To overcome this issue, it makes sense to define $\barC$ to be the class of all vertex-disjoint unions of co-interval graphs and consider the parameters
\[
 \barbo(H) := c_g^{\barC}(H^c), \quad \barbo_\ell(H) := c_\ell^{\barC}(H^c), \quad \barbo_f(H) := c_f^{\barC}(H^c).
\]

We have defined in total six boxicity-related graph parameters, one of which (namely $\bo_f(H)$) turned out to be meaningless by Theorem~\ref{thm:box-f-meaningless}.
Somehow luckily, three of the remaining five parameters always coincide.

\begin{theorem}\label{thm:three-coincide}
 For every graph $H$ we have $\bo_\ell(H) = \barbo_\ell(H) = \barbo_f(H)$.
\end{theorem}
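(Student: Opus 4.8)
The plan is to prove the two equalities separately. The equality $\bo_\ell(H) = \barbo_\ell(H)$ is essentially syntactic. Since $\calC \subseteq \barC$, every $s$-local injective $\calC$-cover of $H^c$ is also an $s$-local injective $\barC$-cover, which gives $\barbo_\ell(H) \le \bo_\ell(H)$. Conversely, given an $s$-local injective $\barC$-cover $\varphi \colon G_1 \cupdot \cdots \cupdot G_t \to H^c$, I would write each $G_i$ as the disjoint union $G_{i,1} \cupdot \cdots \cupdot G_{i,k_i}$ of its connected components, which are co-interval graphs, and observe that the disjoint union of all the $G_{i,j}$ is literally the same graph as $G_1 \cupdot \cdots \cupdot G_t$. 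Hence $\varphi$, viewed with this finer block structure, is an injective $\calC$-cover (it is injective on each $G_{i,j}$ as a restriction of its injective restriction to $G_i$) with exactly the same vertex fibers, so it is still $s$-local. This proves $\bo_\ell(H) \le \barbo_\ell(H)$.

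For $\barbo_\ell(H) = \barbo_f(H)$ the plan is to invoke Proposition~\ref{prop:covering-numbers-inequalities} with the covering class $\barC$: part~(i) yields $\barbo_f(H) \le \barbo_\ell(H)$ as soon as $\barC$ is union-closed, and part~(ii) yields $\barbo_\ell(H) \le \barbo_f(H)$ as soon as $\barC$ is hereditary and homomorphism-closed. Union-closedness is immediate from the definition of $\barC$, and heredity holds because interval graphs, hence co-interval graphs, hence their vertex-disjoint unions, are all closed under taking induced subgraphs. The substantial point — and the step I expect to be the main obstacle — is that $\barC$ is homomorphism-closed: for a connected member $G$ of $\barC$ (equivalently, a connected co-interval graph) and a homomorphism $\varphi \colon G \to H'$, one must show that the image graph $\varphi(G)$, which is automatically connected, is again a co-interval graph.

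To prove this, I would fix an interval representation $\{I(v) \mid v \in V(G)\}$ of $G$, so that $uv \in E(G)$ if and only if $I(u) \cap I(v) = \emptyset$. For a vertex $w$ of $\varphi(G)$ the fiber $S_w = \varphi^{-1}(w)$ is an independent set of $G$ (a homomorphism maps no edge to a loop), so the intervals $\{I(u) \mid u \in S_w\}$ pairwise intersect and, by Helly's theorem on the line, have a nonempty common intersection; set $J(w) := \bigcap_{u \in S_w} I(u)$. I claim that $\{J(w)\}$ is an interval representation of the complement of $\varphi(G)$. If $w_1 w_2 \in E(\varphi(G))$, pick an edge $u^* v^*$ of $G$ with $\varphi(u^*) = w_1$ and $\varphi(v^*) = w_2$; then $J(w_1) \cap J(w_2) \subseteq I(u^*) \cap I(v^*) = \emptyset$. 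If $w_1 w_2 \notin E(\varphi(G))$, then every interval of $S_{w_1}$ meets every interval of $S_{w_2}$, so together with the pairwise intersections inside each fiber the whole family $\{I(x) \mid x \in S_{w_1} \cup S_{w_2}\}$ is pairwise intersecting; Helly again produces a common point, which lies in $J(w_1) \cap J(w_2)$. Hence $\varphi(G)$ is co-interval, $\barC$ is homomorphism-closed, and the theorem follows. The one genuinely delicate point is realizing that $J(w)$ should be the \emph{intersection} (not the union) of the intervals in the fiber; everything else is routine verification together with the cited proposition.
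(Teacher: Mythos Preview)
Your proof is correct and follows essentially the same route as the paper: both prove $\barbo_\ell(H)=\barbo_f(H)$ by verifying that $\barC$ is union-closed, hereditary, and homomorphism-closed and then invoking Proposition~\ref{prop:covering-numbers-inequalities}, and both prove $\bo_\ell(H)=\barbo_\ell(H)$ by refining a $\barC$-cover into a $\calC$-cover componentwise. The only cosmetic difference is in the homomorphism-closedness step: the paper (Lemma~\ref{lem:class-properties}) identifies one pair of non-adjacent vertices at a time, setting $I(z)=I(x)\cap I(y)$, whereas you collapse an entire fiber in one shot via Helly's theorem and set $J(w)=\bigcap_{u\in S_w} I(u)$; this is the same idea, just carried out all at once rather than inductively.
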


Proposition~\ref{prop:covering-numbers-inequalities} gives $\barbo_\ell(H) = c_\ell^{\barC}(H^c) \leq c_g^{\barC}(H^c) = \barbo(H)$ for every input graph $H$.
As $\calC \subset \barC$ we have $\barbo(H) = c_g^{\barC}(H^c) \leq c_g^{\calC}(H^c) = \bo(H)$ for every input graph $H$.
Hence with Theorem~\ref{thm:three-coincide} for every graph $H$ the remaining three boxicity-related parameters fulfil:

\begin{equation}
 \bo_\ell(H) \leq \barbo(H) \leq \bo(H).\label{eq:inequalities}
\end{equation}

We refer to $\bo_\ell(H)$ as the \emph{local boxicity of $H$} and to $\barbo(H)$ as the \emph{union boxicity of $H$}. 
Indeed, the three parameters boxicity, local boxicity and union boxicity are non-trivial and reflect different aspects of the graph, as will be investigated in more detail in this paper.

\begin{theorem}\label{thm:separation}
 For every positive integer $k$ there exist graphs $H_k, H'_k, H''_k$ with
 \begin{enumerate}[label = (\roman*)]
  \item $\bo_\ell(H_k) \geq k$,\label{enum:local-high}
  
  \item $\bo_\ell(H'_k) = 2$ and $\barbo(H'_k) \geq k$,\label{enum:local-smaller-union}
  
  \item $\barbo(H''_k) = 1$ and $\bo(H''_k) = k$.\label{enum:union-smaller-global}
 \end{enumerate}
\end{theorem}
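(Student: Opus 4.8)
We need three separate families of constructions, one for each part.

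For part~\ref{enum:local-high}, I would first look for a single graph family whose complement cannot be covered locally with few co-interval graphs. A natural candidate is a graph with large boxicity that is also "spread out" enough that locality does not help — for instance, suitable powers or products, or the complement of a disjoint union of large cliques (i.e.\ a complete multipartite graph), or Kneser-type graphs. Since the existing literature (Roberts, Chandran--Sivadasan, etc.) gives graphs on $n$ vertices with boxicity $\Theta(n)$, but boxicity is only an upper bound for $\bo_\ell$, a direct counting argument is needed: I would argue that in any $s$-local cover of $H_k^c$ by co-interval graphs, each co-interval graph $G_i^c$ can only "handle" edges incident to vertices that it touches, and at each vertex only $s$ of the $G_i$ are allowed; then a counting/entropy argument on how many pairs each co-interval graph can distinguish, combined with a lower bound on the number of "independent constraints" forced by $H_k$, yields $s \ge k$. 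Concretely, I expect to use a graph $H_k$ whose complement contains many induced matchings of a controlled size, since a co-interval graph (complement of an interval graph) has no induced $2K_2$, so each $G_i^c$ is $2K_2$-free; iterating this restriction across the $s$ layers at each vertex gives the bound.

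For part~\ref{enum:local-smaller-union}, I want $\bo_\ell(H'_k)=2$ but $\barbo(H'_k)$ large. The upper bound $\bo_\ell(H'_k)\le 2$ means $(H'_k)^c$ can be covered by co-interval graphs, at most two per vertex — so I would build $H'_k$ as a graph assembled from many "local gadgets", each of which individually has boxicity $2$, arranged so that their interaction is confined locally (each vertex sees only two gadgets) but globally no small family of vertex-disjoint unions of co-interval graphs suffices. A clean way: take $k$ copies of a boxicity-$2$ graph and glue them in a cyclic or tree-like pattern sharing vertices, so locally each vertex is in at most two copies; for the lower bound on $\barbo$, use Theorem~\ref{thm:three-coincide} to rephrase $\barbo(H'_k)\ge c_f^{\barC}((H'_k)^c)$ is \emph{not} what we want — rather we bound $c_g^{\barC}$ directly by arguing that a single member of $\barC$ (a disjoint union of co-interval graphs) covering edges across many gadgets must "use up" a whole gadget's worth of structure, so the number of disjoint unions needed grows with $k$. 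I expect the lower-bound side here to be the subtler of the two.

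For part~\ref{enum:union-smaller-global}, I need $\barbo(H''_k)=1$, i.e.\ $(H''_k)^c$ is itself a disjoint union of co-interval graphs (equivalently, $H''_k$ is the join — complement of disjoint union — of interval graphs, up to the $\bar{\cdot}$ convention), while $\bo(H''_k)=k$. This is the most concrete part: take $k$ graphs $F_1,\dots,F_k$ each of boxicity $1$... no — take graphs whose \emph{complements} are interval graphs on disjoint vertex sets, so $(H''_k)^c = F_1^c \cupdot \cdots$ is trivially in $\barC$, giving $\barbo\le 1$; but then $H''_k$ is a join of interval graphs, and I would cite/reprove that the boxicity of a join of $k$ graphs equals $\sum$ of... actually the join of $k$ interval graphs can have boxicity up to $k$ — this follows because in any box representation, the "join" structure forces each factor's non-edges to be separated in a distinct coordinate direction. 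The main technical point is choosing the interval graphs $F_i$ so that the boxicity of their join is \emph{exactly} $k$, not less; I would pick each $F_i$ so that $F_i^c$ has a non-edge (so $F_i$ is not complete) and invoke a known additivity result for boxicity under join, or prove the lower bound $\bo(H''_k)\ge k$ by a direct argument: any interval graph factor $\bar G_j \supseteq H''_k$ in an intersection representation can, on each "block", realize non-adjacency in at most one block, forcing $d\ge k$.

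The main obstacle across all three parts is the lower bounds, and especially part~\ref{enum:local-high}: pushing a counting argument through the $s$-local restriction requires quantifying how much "distinguishing power" $s$ overlapping $2K_2$-free graphs have at a single vertex, and matching that against an explicit hard instance $H_k$; I would spend most of the effort constructing $H_k$ (likely from a product or a random-like incidence structure) so that this counting is tight.
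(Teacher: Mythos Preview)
Your proposal is uneven across the three parts.

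\textbf{Part~\ref{enum:union-smaller-global}.} Your idea is exactly the paper's: take $(H''_k)^c$ to be a vertex-disjoint union of $k$ co-interval graphs, each containing at least one edge. The paper uses the simplest instance, the matching $M_k$ on $k$ edges; then $H''_k = M_k^c$ is the join of $k$ copies of $\overline{K_2}$. Your lower-bound reasoning (``an interval factor can realise non-adjacency in at most one block'') is the paper's observation that a co-interval graph has at most one connected component containing an edge.

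\textbf{Part~\ref{enum:local-high}.} You have the key structural fact --- co-interval graphs are $2K_2$-free --- but your candidate constructions (complete multipartite, Kneser, random-like incidence structures) do not exploit it, and ``iterating this restriction across the $s$ layers'' is not an argument. The paper's construction is much sharper: let $F_k$ be any $2k$-regular graph of girth at least $6$ and set $H_k = F_k^c$. The girth condition is the missing idea: in a graph of girth $\geq 6$, every induced cycle has length $\geq 6$ and so contains an induced $2K_2$; hence every co-interval subgraph of $F_k$ is a \emph{forest}. Now the counting is immediate: an $s$-local cover by forests $G_1,\ldots,G_t$ gives
\[
2k\,|V(F_k)| = \sum_v \deg_{F_k}(v) \le \sum_i \sum_{v\in V(G_i)} \deg_{G_i}(v) < \sum_i 2|V(G_i)| \le 2s\,|V(F_k)|,
\]
so $s\ge k$. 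No entropy or product structure is needed.

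\textbf{Part~\ref{enum:local-smaller-union}.} Here there is a genuine gap. Your ``glue $k$ boxicity-$2$ gadgets so each vertex meets two'' gives the upper bound $\bo_\ell\le 2$ easily, but you have no mechanism for the lower bound on $\barbo$: nothing prevents a single graph in $\barC$ --- remember, a member of $\barC$ may have \emph{many} connected components, one co-interval graph each --- from picking off one edge per gadget and thereby covering all $k$ gadgets at once. Bounding $c_g^{\barC}$ from below needs an obstruction inside a \emph{single connected component} of each $G_i$.

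The paper's choice is $H'_k = (L(K_n))^c$ for $n$ large. The upper bound is neat: for each $i\in[n]$, the edges of $K_n$ at vertex $i$ form a clique in $L(K_n)$, and these $n$ cliques cover $E(L(K_n))$ with every vertex of $L(K_n)$ in exactly two of them, so $\bo_\ell(H'_k)\le 2$. For the lower bound, suppose $L(K_n)$ is covered by $G_1,\ldots,G_t\in\barC$ with $t\le k$. Colour each triple $x<y<z$ by the least $i$ with $\{xy,yz\}\in E(G_i)$. Taking $n>R_k(K_6^3)$, hypergraph Ramsey yields six vertices $x_1,\ldots,x_6$ with all triples the same colour $i$; then $x_1x_2,x_2x_3,x_4x_5,x_5x_6$ lie in one connected component of $G_i$, and the edges $\{x_1x_2,x_2x_3\}$ and $\{x_4x_5,x_5x_6\}$ form an induced $2K_2$ there, contradicting $G_i\in\barC$. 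The Ramsey step is precisely what forces the obstruction into a \emph{single} co-interval component --- this is the idea your gadget approach lacks.
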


We also give geometric interpretations of the local and union boxicity of a graph $H$ in terms of intersecting high-dimensional boxes.
For positive integers $k,d$ with $k \leq d$ we call a $d$-dimensional box $B = I_1 \times \cdots \times I_d$ \emph{$k$-local} if for at most $k$ indices $i \in \{1,\ldots,d\}$ we have $I_i \neq \R$.
Thus a $k$-local $d$-dimensional box is the Cartesian product of $d$ intervals, at least $d-k$ of which are equal to the entire real line $\R$.

\begin{theorem}\label{thm:geometric-interpretation}
 Let $H$ be a graph.
 \begin{enumerate}[label = (\roman*)]
 \item We have $\barbo(H) \leq k$ if and only if there exist
   $d_1,\ldots,d_k$ such that $H$ is the intersection graph of
   Cartesian products of $k$ boxes, where the $i$th box is $1$-local
   $d_i$-dimensional,
   $i = 1,\ldots,k$.\label{enum:interpretation-union}
  
  \item We have $\bo_\ell(H) \leq k$ if and only if there exists some $d$ such that $H$ is the intersection graph of $k$-local $d$-dimensional boxes.\label{enum:interpretation-local}
 \end{enumerate}
\end{theorem}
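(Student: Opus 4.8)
The plan is to reduce both statements to the basic correspondence between box representations and covering by co-interval graphs, already made explicit in the discussion around \eqref{eq:DeMorgan}, and then track which coordinates are ``active''. For part~\ref{enum:interpretation-local}, recall that if $\bo_\ell(H) \le k$ then by Theorem~\ref{thm:three-coincide} and the definition of the local covering number there is an injective $\calC$-cover $\varphi \colon G_1 \cupdot \cdots \cupdot G_d \to H^c$ with each $G_i$ a co-interval graph and with every vertex of $H^c$ contained in at most $k$ of the images $\varphi(G_i)$. Pass to the complements: each $\overline{G_i^c}$ (the interval graph on vertex set $V(H)$, with the non-image vertices added as isolated, hence universal in the complement) is an interval graph, so it has an interval representation $v \mapsto I_i(v)$ in which every vertex $v \notin \varphi(G_i)$ can be assigned $I_i(v) = \R$; indeed an isolated vertex of $G_i^c$ is adjacent to everything in the interval graph $\overline{G_i^c}$, and one checks that one may realize such a universal vertex by the full line. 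Then $B(v) := I_1(v) \times \cdots \times I_d(v)$ is a box representation of $H$ by De Morgan, and for each $v$ at most $k$ of the factors differ from $\R$, i.e.\ each $B(v)$ is $k$-local $d$-dimensional. Conversely, a representation by $k$-local $d$-dimensional boxes gives interval graphs $G_1,\dots,G_d$ whose intersection is $H$; letting $\varphi$ be the obvious disjoint-union-to-$H^c$ map on the $G_i^c$ restricted to their non-isolated parts, a vertex $v$ lies in $\varphi(G_i^c)$ only when the $i$th interval of $B(v)$ is not $\R$ (after discarding coordinates where $I_i(v)=\R$ for all $v$, which only shrinks things), so $\varphi$ is $k$-local; hence $\bo_\ell(H) = \barbo_f(H) \le k$.

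For part~\ref{enum:interpretation-union}, start from $\barbo(H) = c_g^{\barC}(H^c) \le k$: there are graphs $F_1,\dots,F_k \in \barC$, i.e.\ each $F_i = G_{i,1} \cupdot \cdots \cupdot G_{i,d_i}$ is a vertex-disjoint union of co-interval graphs, with $H^c = \bar F_1 \cup \cdots \cup \bar F_k$. Fix $i$. Because the pieces of $F_i$ are on pairwise disjoint vertex sets, the complement $\overline{\bar F_i^{\,c}}$ behaves like an intersection of $d_i$ interval graphs but with a strong locality: each vertex $v$ of $H$ lies in at most one piece $G_{i,j}$, so when we take interval representations of the $d_i$ interval graphs $\overline{G_{i,j}^c}$ (padded with universal vertices), we may give $v$ the full line $\R$ in all but the one coordinate $j = j(v)$ corresponding to the piece containing $v$. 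Thus the $i$th factor of the product box for $v$ is $1$-local $d_i$-dimensional. Taking the Cartesian product over $i = 1,\dots,k$ yields the claimed intersection representation. The converse direction reverses this: from a representation where the $i$th block of coordinates forms a $1$-local $d_i$-dimensional box, group the $d_i$ interval graphs of block $i$; the $1$-locality says the non-universal vertices of these $d_i$ interval graphs are pairwise disjoint, so their complements' union is a vertex-disjoint union of co-interval graphs, i.e.\ lies in $\barC$, and over the $k$ blocks we recover $H^c$ as a union of $k$ members of $\barC$, giving $\barbo(H) \le k$.

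The one genuinely delicate point — and the step I expect to need the most care — is the handling of universal vertices: every appeal to ``assign $I_i(v) = \R$'' must be justified. An isolated vertex of a co-interval graph $G_i^c$ corresponds, after complementation, to a universal vertex of the interval graph $\overline{G_i^c}$, and one needs that an interval graph with a distinguished universal vertex admits an interval representation in which that vertex (and, simultaneously, all such vertices) receives the entire real line. This is true because universal vertices of interval graphs can be realized by intervals containing the union of all others, and such an interval can be enlarged to $\R$ without creating or destroying any intersection; doing this for all universal vertices at once is fine since they are mutually adjacent. Symmetrically, in the converse directions one should first normalize a box representation by deleting every coordinate in which \emph{all} vertices get $\R$ (such a coordinate contributes the complete graph and is redundant), which ensures the dimension counts $d$, $d_i$ match up with the covering formulation and that ``$I_i(v) \ne \R$'' faithfully records membership in $\varphi(G_i)$. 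With these normalizations in place, both equivalences are a routine translation through De Morgan's law \eqref{eq:DeMorgan} and Theorem~\ref{thm:three-coincide}.
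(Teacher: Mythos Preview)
Your proposal is correct and follows essentially the same approach as the paper: translate between box representations and covers of $H^c$ by co-interval graphs via De~Morgan, padding with $\R$ on inactive coordinates so that ``bounded in coordinate~$i$'' matches ``vertex lies in the $i$th piece of the cover.'' The paper isolates the $1$-local case as a separate Lemma~\ref{lem:union-interpretation} and derives part~\ref{enum:interpretation-union} from it, whereas you inline that step and are more explicit about the universal-vertex normalization, but the underlying argument is the same.
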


There is a number of results in the literature stating that the boxicity of certain graphs is low, for which we can easily see that the local boxicity is even lower.
Indeed, often an intersection representation with $d$-dimensional boxes is constructed, in order to show that $\bo(H) \leq d$, and in many cases these representations consist of $s$-local $d$-dimensional boxes for some $s < d$ (or can be turned into such quite easily).
Hence, with Theorem~\ref{thm:geometric-interpretation} we can conclude in such cases that $\bo_\ell(H) \leq s$.

Let us restrict here to one such case, which is comparably simple.
For a graph $H$ the \emph{acyclic chromatic number}, denoted by $\chi_a(H)$, is the smallest $k$ such that there exists a proper vertex coloring of $H$ with $k$ colors in which any two color classes induce a forest.
In other words, an acyclic coloring has no monochromatic edges and no bicolored cycles.
Esperet and Joret~\cite{EJ-13} have recently shown that for any graph $H$ with $\chi_a(H) = k$ we have $\bo(H) \leq k(k-1)$.
Indeed, their proof (which we include here for completeness) gives an intersection representation of $H$ with $2(k-1)$-local $k(k-1)$-dimensional boxes, implying the following theorem.

\begin{theorem}\label{thm:acyclic-coloring}
 For every graph $H$ we have $\bo_\ell(H) \leq 2(\chi_a(H)-1)$.
\end{theorem}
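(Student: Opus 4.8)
The plan is to invoke the geometric characterisation of local boxicity. By Theorem~\ref{thm:geometric-interpretation}(ii) it suffices to exhibit, for $k := \chi_a(H)$, some $d$ for which $H$ is the intersection graph of $2(k-1)$-local $d$-dimensional boxes. So I would take an acyclic colouring of $H$ with colour classes $V_1,\dots,V_k$ and build from it a box representation in dimension $d = k(k-1)$, essentially the one implicit in the argument of Esperet and Joret~\cite{EJ-13}, and then simply read off that every box is $2(k-1)$-local. (I would assume $k\ge 2$; the only remaining case is an edgeless graph.)

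Concretely, I would proceed as follows. For each unordered pair $\{a,b\}$ with $a\ne b$, properness of the colouring makes $V_a$ and $V_b$ independent, so all edges of $F_{ab}:=H[V_a\cup V_b]$ run between $V_a$ and $V_b$, and acyclicity makes $F_{ab}$ a forest. I would root every component of $F_{ab}$ and devote two of the $k(k-1)$ coordinates to $\{a,b\}$: in the first, the vertices of $F_{ab}$ receive a laminar family of intervals (depth-first search discovery/finish times, distinct components placed in disjoint parts of $\R$), so that adjacency in this coordinate means ``comparable in the rooted forest''; in the second, a vertex $v$ of $F_{ab}$ receives the unit interval $[\operatorname{depth}(v),\operatorname{depth}(v)+1]$, so that adjacency there means ``depths differ by at most $1$''. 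In both coordinates, every vertex outside $V_a\cup V_b$ receives the interval $\R$.

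The verification then splits into three easy checks. First, on $V_a\cup V_b$ the intersection of the two coordinate graphs of $\{a,b\}$ is exactly the parent--child relation, i.e.\ $F_{ab}$: a pair that is comparable and at depth distance $\le 1$ is precisely a vertex and its parent. Second, the intersection of all $k(k-1)$ coordinate graphs is $H$: an edge $uv$ with colours $a\ne b$ lies in $F_{ab}$, and for every other pair one of $u,v$ is coloured outside it and hence has interval $\R$, so $uv$ survives; a non-edge $uv$ with distinct colours $a,b$ is not in $F_{ab}$ and is thus killed by a coordinate of $\{a,b\}$; and a non-edge $uv$ with equal colour $a$ is, for an arbitrary second colour $b$, killed already by $\{a,b\}$ --- by the depth coordinate if $u$ and $v$ lie in a common component of $F_{ab}$ (same-colour comparable vertices are at even, hence $\ge 2$, depth distance) and by the comparability coordinate otherwise. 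Third, a vertex coloured $a$ has a non-$\R$ interval only in the two coordinates of a pair $\{a,b\}$, hence in $2(k-1)$ coordinates in total, so its box is $2(k-1)$-local. Theorem~\ref{thm:geometric-interpretation}(ii) then yields $\bo_\ell(H)\le 2(k-1)=2(\chi_a(H)-1)$.

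I do not expect a real obstacle: the construction is classical and the remainder is bookkeeping. The two points I would be careful about are that the two coordinates of a pair intersect to \emph{exactly} $F_{ab}$ rather than to a proper supergraph of it (this is where properness of the colouring, via the even depth distance of same-colour comparable vertices, is used), and that globalising a vertex to $\R$ in the coordinates of pairs not containing its colour never removes a separation that was needed --- which is fine because, as checked above, each vertex is already separated from all of its non-neighbours within the coordinates of the pairs that do contain its colour.
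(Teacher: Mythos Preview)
Your proposal is correct and takes essentially the same route as the paper: decompose $H$ into the $\binom{k}{2}$ bichromatic induced forests $F_{ab}$, use two interval coordinates per forest (the paper just invokes $\bo(F_{ab})\le 2$ as a black box, whereas you spell out a concrete representation), extend by $\R$ to all of $V(H)$, and take the Cartesian product to get a $2(k-1)$-local $k(k-1)$-dimensional box representation, then apply Theorem~\ref{thm:geometric-interpretation}\ref{enum:interpretation-local}.

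One small slip in your write-up: for a same-colour non-edge $uv$, the case split should be ``comparable vs.\ incomparable'' rather than ``same component vs.\ different component''. Two vertices of colour $a$ can lie in a common component of $F_{ab}$ yet be incomparable (e.g.\ two grandchildren of a common root), and then the depth coordinate need not separate them (they may be at equal depth); it is the comparability coordinate that does. Your parenthetical already restricts the even-depth argument to comparable pairs, so this is just a one-word fix to the surrounding sentence and does not affect the substance of the proof.
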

\begin{proof}
 Let $c$ be an acyclic coloring of $H$ with $k$ colors.
 For any pair $\{i,j\}$ of colors consider the subgraph $G_{i,j}$ induced by the vertices of colors $i$ and $j$.
 As $G_{i,j}$ is a forest, we have $\bo(G_{i,j}) \leq 2$ (this follows from~\cite{Sch-84} but can also be seen fairly easily).
 Moreover, since $H$ is the union of all $G_{i,j}$, the complement $H^c$ of $H$ is the intersection of the complements of all $\bar{G}_{i,j}$ (note the use of $\bar{G}_{i,j}$ instead of $G_{i,j}$ here).
 
 Now take an intersection representation of $G_{i,j}$ with $2$-dimensional boxes and extend it to one for $\bar{G}_{i,j}$ by putting the box $\R^2$ for each vertex colored neither $i$ nor $j$.
 Then the Cartesian product of all these $\binom{k}{2}$ box representations is an intersection representation of $H$ with $2(k-1)$-local $k(k-1)$-dimensional boxes.
 This proves that $\bo(H) \leq k(k-1)$ and $\bo_\ell(H) \leq 2(k-1)$, as desired.
\end{proof}

%%%%%%%%%%%%%%%%%%%%%%%%%%%%%%%
%% organization of the paper %%
%%%%%%%%%%%%%%%%%%%%%%%%%%%%%%%
\paragraph{Organization of the paper.}

In Section~\ref{sec:general} we prove Theorem~\ref{thm:box-f-meaningless}, i.e., that $\bo_f(H)$ is meaningless, and Theorem~\ref{thm:three-coincide}, i.e., that three of the remaining five boxicity variants coincide.
In Section~\ref{sec:separation} we consider the problem of separation for boxicity and its local and union variants, that is, we give a proof of Theorem~\ref{thm:separation}.
In Section~\ref{sec:geometry} we describe and prove the geometric interpretations of local and union boxicity from Theorem~\ref{thm:geometric-interpretation}.
Finally, we give some concluding remarks and open problems in Section~\ref{sec:conclusions}.

%%%%%%%%%%%%%%%%%%%%%%%%%%%%%%%
%% LOCAL AND UNION BOXICITY  %%
%%%%%%%%%%%%%%%%%%%%%%%%%%%%%%%
\section{Local and Union Boxicity}\label{sec:general}

Recall that a graph class $\calG$ is homomor\-phism-closed if for every \emph{connected} graph $G \in \calG$ and any homorphism $\varphi : G \to H$ into some graph $H$ we have $\varphi(G) \in \calG$.
Since $\varphi$ is a homomor\-phism, $\varphi(G)$ arises from $G$ by a series of ``inverse vertex splits'', i.e., an independent set in $G$ is identified into a single vertex of $\varphi(G)$.
If $\calG$ is not only homomor\-phism-closed, but also closed under identifying non-adjacent vertices in \emph{disconnected} graphs, then the folded covering number $c_f^{\calG}$ turns out to be somewhat meaningless.

\begin{lemma}\label{lem:identifying-non-adjacent-vertices}
 If a covering class $\calG$ is closed under identifying non-adjacent vertices, then for every non-empty input graph $H$ we have
 \[
  c_f^{\calG}(H)<\infty \quad \Longleftrightarrow \quad H\in\calG \quad \Longleftrightarrow \quad c_f^{\calG}(H)=1.
 \]
\end{lemma}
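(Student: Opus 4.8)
The plan is to prove the two equivalences through the implication cycle
\[
  c_f^{\calG}(H)<\infty \;\Longrightarrow\; H\in\calG \;\Longrightarrow\; c_f^{\calG}(H)=1 \;\Longrightarrow\; c_f^{\calG}(H)<\infty ,
\]
in which the last step is trivial. For the middle step, if $H\in\calG$ then the identity map $\mathrm{id}\colon H\to H$ is a $1$-global $1$-local $\calG$-cover of $H$, so $c_f^{\calG}(H)\le 1$; and since $H$ is non-empty, clearly $c_f^{\calG}(H)\ge 1$, so equality holds. Thus the substance of the lemma is the first implication.

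For the first implication, suppose $c_f^{\calG}(H)<\infty$ and fix a $1$-global $\calG$-cover $\varphi\colon G_1\to H$ with $G_1\in\calG$; we may assume $\varphi$ is surjective, so that its fibres are indexed by $V(H)$. First observe that each fibre $\varphi^{-1}(v)$ is an independent set in $G_1$, since an edge inside a fibre would be mapped by the homomor\-phism $\varphi$ to a (non-existent) loop at $v$. The idea now is to reconstruct $H$ from $G_1$ by collapsing each fibre to a single vertex, performing these collapses one fibre at a time and, within a fibre, one pair of vertices at a time, and then to invoke the closure hypothesis on $\calG$. The point that makes this legitimate is that identifying two non-adjacent vertices $x,y$ inside a fibre $\varphi^{-1}(v)$ creates a new vertex whose neighbours are exactly the neighbours of $x$ together with those of $y$; as $\varphi^{-1}(v)$ was independent, this new vertex is still non-adjacent to every not-yet-collapsed vertex of $\varphi^{-1}(v)$, so the next collapse within this fibre is again an identification of two non-adjacent vertices. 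Iterating over all fibres, and using that $\calG$ is closed under identifying non-adjacent vertices, the graph $G'$ obtained after all fibres have been collapsed still belongs to $\calG$.

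It remains to identify $G'$ with $H$. Its vertex set is the set of fibres, which we identify with $V(H)$ via $\varphi$. Two distinct fibres $\varphi^{-1}(u)$ and $\varphi^{-1}(v)$ are adjacent in $G'$ if and only if $G_1$ contains an edge between them; such an edge is mapped by $\varphi$ to $uv$, forcing $uv\in E(H)$, and conversely, since $\varphi$ is edge-surjective, every edge $uv\in E(H)$ is the image of some edge of $G_1$, which then necessarily runs between $\varphi^{-1}(u)$ and $\varphi^{-1}(v)$. Hence $G'$ and $H$ have the same vertex set and the same edges, i.e.\ $G'\cong H$, and therefore $H\in\calG$.

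The one delicate point, which is where I expect the real care to be needed, is the claim that collapsing a fibre can be realised as a sequence of identifications of \emph{non-adjacent} vertices, so that a closure hypothesis phrased for a single such identification actually applies; the independence of the fibres is exactly what delivers this. Note that, unlike in the homomor\-phism-closed setting, no connectivity of $G_1$ is used anywhere, which is precisely why the lemma needs the stronger closure assumption rather than mere homomor\-phism-closedness.
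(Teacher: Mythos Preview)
Your argument is correct and essentially the same as the paper's: both take a $1$-global cover $\varphi\colon G_1\to H$ and repeatedly identify pairs of non-adjacent vertices lying in the same fibre until an injective cover (equivalently, a copy of $H$) is reached. The paper phrases this as an induction on $|V(G_1)|$ and concludes $c_f^{\calG}(H)=1$ directly, whereas you organise the collapses fibre-by-fibre and conclude $H\in\calG$; these are the same proof in different clothing.
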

\begin{proof}
 The right equivalence follows by definition of $c_f^{\calG}(H)$.
 
 The implication $H \in \calG \Rightarrow c_f^{\calG}(H) < \infty$ in the first equivalence is thereby obvious, and it is left to show that $c_f^{\calG}(H) = 1$ whenever $c_f^{\calG}(H) < \infty$.
 So let $\varphi:G_1 \to H$ be any $1$-global cover of $H$.
 We do induction over $|V(G_1)|$, the number of vertices in $G_1$.

 If $|V(G_1)| = |V(H)|$, i.e., no vertices are folded, then $\varphi$ is injective and therefore $c_f^{\calG}(H)=1$.
 So assume that $|V(G_1)| > |V(H)|$ and let $v,w$ be distinct vertices in $G_1$ with $\varphi(v) = \varphi(w)$.
 Consider the graph $G'_1$ that we obtain by identifying $v$ and $w$ in $G_1$.
 Since $\varphi(v) = \varphi(w)$ is only possible if $v$ and $w$ are non-adjacent, and $\calG$ is closed under identifying non-adjacent vertices we know that $G_1'\in\calG$.
 Now the $1$-global $\calG$-cover $\varphi : G_1 \to H$ induces a $1$-global $\calG$-cover $\varphi':G_1'\to H$ by $\varphi = \varphi' \circ \psi$, where $\psi : G_1 \to G'_1$ identifies $v$ and $w$ in $G_1$ and fixes all other vertices.
 As $|V(G'_1)| = |V(G_1)| - 1$, we can apply induction to $\varphi'$ to conclude that $c_f^{\calG}(H)=1$.
\end{proof}

\begin{lemma}\label{lem:class-properties}
 Let $\calC$ be the class of all co-interval graphs and $\barC$ be the class of all vertex-disjoint unions of co-interval graphs.
 Then
 \begin{enumerate}[label = (\roman*)]
  \item $\calC$ and $\barC$ are hereditary,\label{enum:barC-and-calC}
  \item $\calC$ is closed under identifying non-adjacent vertices, and\label{enum:calC}
  \item $\barC$ is homomor\-phism-closed.\label{enum:barC}
 \end{enumerate}
%  are hereditary and homomor\-phism-closed.
%  Moreover, $\barC$ is union-closed.
\end{lemma}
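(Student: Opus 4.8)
The plan is to prove the three parts separately, reducing each to an elementary fact about interval representations, with part~\ref{enum:calC} as the technical core and part~\ref{enum:barC} following from it. Part~\ref{enum:barC-and-calC} I would handle directly: restricting an interval representation of an interval graph $G$ to a vertex subset $W$ gives an interval representation of the induced subgraph $G[W]$, so interval graphs are hereditary, and since taking induced subgraphs commutes with complementation, $\calC$ is hereditary; an induced subgraph of $G_1 \cupdot \cdots \cupdot G_m$ with each $G_i \in \calC$ is the vertex-disjoint union of induced subgraphs of the $G_i$, each of which lies in $\calC$, so $\barC$ is hereditary as well.

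For part~\ref{enum:calC} the key ingredient is the Helly property of intervals on the real line: pairwise intersecting intervals have a common point. Let $G \in \calC$, fix an interval representation $\{I(x) : x \in V(G)\}$ of $G^c$, and let $v, w$ be non-adjacent in $G$, i.e., $I(v) \cap I(w) \neq \emptyset$. Let $G'$ arise from $G$ by identifying $v$ and $w$ into a new vertex $u$, so that in $G'$ the vertex $u$ is adjacent to $x$ exactly when $vx \in E(G)$ or $wx \in E(G)$. I claim that keeping $I(x)$ for every $x \neq v,w$ and using the nonempty interval $I(u) := I(v) \cap I(w)$ for $u$ yields an interval representation of $(G')^c$: the vertices $u$ and $x$ are adjacent in $(G')^c$ iff $I(v) \cap I(x) \neq \emptyset$ and $I(w) \cap I(x) \neq \emptyset$; if $I(u) \cap I(x) \neq \emptyset$ then both conditions hold because $I(u) \subseteq I(v) \cap I(w)$, while conversely, if both conditions hold then $I(v), I(w), I(x)$ are pairwise intersecting, so by the Helly property $I(u) \cap I(x) = I(v) \cap I(w) \cap I(x) \neq \emptyset$. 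Hence $(G')^c$ is an interval graph and $G' \in \calC$.

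For part~\ref{enum:barC}, first observe that a connected member of $\barC$ is a vertex-disjoint union of co-interval graphs with a single summand, hence lies in $\calC$. So let $G \in \calC$ be connected and let $\varphi : G \to H$ be a homomorphism; since $H$ has no loops, any two vertices of $G$ in a common fiber $\varphi^{-1}(h)$ are non-adjacent. The image $\varphi(G)$ is obtained from $G$ by contracting each fiber to a single vertex, which I would carry out one pair at a time: at each step the two vertices identified lie in a common fiber, and such vertices are non-adjacent --- initially because fibers are independent sets in $G$, and this is preserved because identifying two non-adjacent vertices of a fiber never makes two vertices of that fiber adjacent. Applying part~\ref{enum:calC} at every step gives $\varphi(G) \in \calC \subseteq \barC$, so $\barC$ is homomorphism-closed.

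I expect the main obstacle to be part~\ref{enum:calC}: one must translate the operation ``identify non-adjacent vertices of $G$'' into the correct operation on an interval representation of the complement $G^c$, and the Helly property is exactly what makes the natural choice $I(u) = I(v) \cap I(w)$ work. In part~\ref{enum:barC} the only point requiring care is the bookkeeping that realizes the homomorphic image as a sequence of identifications of non-adjacent vertices, so that part~\ref{enum:calC} can be applied repeatedly.
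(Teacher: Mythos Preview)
Your proposal is correct and follows essentially the same route as the paper: restrict the interval representation for part~\ref{enum:barC-and-calC}, assign the merged vertex the interval $I(v)\cap I(w)$ in part~\ref{enum:calC}, and reduce part~\ref{enum:barC} to repeated applications of part~\ref{enum:calC}. Your explicit appeal to the Helly property in part~\ref{enum:calC} is in fact a little cleaner than the paper's own phrasing of that step.
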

\begin{proof}
 \begin{enumerate}[label = (\roman*)]
  \item Consider any graph $G \in \barC$.
   Then $G = G_1 \cupdot \cdots \cupdot G_t$ for some $G_1,\ldots,G_t \in \calC$.
   If $G \in \calC$, then $t=1$.
   For $i \in [t]$ consider an intersection representation $\{I_i(v) \mid v \in V(G_i)\}$ of $G_i^c$ with intervals.
   For any vertex set $S \subseteq V(G)$, consider the induced subgraphs when restricted to vertices in $S$, i.e., $G' = G[S]$ and $G'_i = G_i[V(G_i) \cap S]$ for $i \in [t]$.
   Note that $\{I_i(v) \mid v \in V(G_i) \cap S\}$ is an interval representation of $(G'_i)^c$, i.e., $G'_i \in \calC$.
   Hence $G' = G'_1 \cupdot \cdots \cupdot G'_t \in \barC$ and $G' \in \barC$ if $t = 1$.
   This shows that $\calC$ and $\barC$ are hereditary.
 
  \item Let $G \in \calC$, $x,y$ be two non-adjacent vertices in $G$ and $\{I(v) \mid v \in V(G)\}$ be an intersection representation of $G$ with intervals.
   Let $G'$ be the graph obtained from $G$ by identifying $x$ and $y$ into a single vertex $z$.
%  Then for the graph $G'$ obtained from $G$ by identifying $x$ and $y$ we have $G' = G'_1 \cupdot G_2 \cupdot \cdots \cupdot G_t$.
%  
   Since $xy \in E(G^c)$ we have $I(x) \cap I(y) \neq \emptyset$ and hence $I(z) := I(x) \cap I(y)$ is a non-empty interval.
   As for any interval $J$ we have $J \cap I(z) \neq \emptyset$ if and only if $J \cap I(x) \neq \emptyset$ or $J \cap I(y) \neq \emptyset$ or both, we have that $\{I(v) \mid v \in V(G), v \neq x,y\} \cup \{I(z)\}$ is an intersection representation of $(G')^c$ and thus $G' \in \calC$, as desired.
%   It follows that $G' = G'_1 \cupdot G_2 \cupdot \cdots \cupdot G_t \in \barC$ and $G' \in \calC$ if $t=1$.
%   This shows that $\calC$ and $\barC$ are homomor\-phism-closed.

  \item If $G \in \barC$ then $G = G_1 \cupdot \cdots \cupdot G_t$ for some $G_1,\ldots,G_t \in \calC$.
   If $x,y$ are two non-adjacent vertices in the same connected component, then $x,y$ are in the same $G_i$, say $G_1$.
   By~\ref{enum:calC} identifying $x$ and $y$ in $G_1$ gives a graph $G'_1 \in \calC$.
   Moreover, identifying $x$ and $y$ in $G$ gives a graph $G' = G'_1 \cupdot G_2 \cupdot \cdots \cupdot G_t$.
   As $G'_1 \in \calC$ we have $G' \in \barC$ and hence $\barC$ is homomor\-phism-closed.\qedhere
 \end{enumerate}
\end{proof}

% \begin{theorem}\label{thm:box-f-meaningless}
%  For every graph $G$ we have $\bo_f(G) = 1$ if $G \in \calI$ and $\bo_f(G) = \infty$ otherwise.
% \end{theorem}
% 
\begin{proof}[Proof of Theorem~\ref{thm:box-f-meaningless}]
 This is a direct corollary of Lemma~\ref{lem:identifying-non-adjacent-vertices} and Lemma~\ref{lem:class-properties}~\ref{enum:calC}.
\end{proof}

% \begin{theorem}\label{thm:three-coincide}
%  For every graph $G$ we have $\bo_\ell(G) = \barbo_\ell(G) = \barbo_f(G)$.
% \end{theorem}
% 
\begin{proof}[Proof of Theorem~\ref{thm:three-coincide}]
 We have that $\barC$ is hereditary by Lemma~\ref{lem:class-properties}~\ref{enum:barC-and-calC}, homomor\-phism-closed by Lemma~\ref{lem:class-properties}~\ref{enum:barC} and union-closed by definition.
 Hence by Proposition~\ref{prop:covering-numbers-inequalities} we have $\barbo_f(H) = c_f^{\barC}(H^c) = c_\ell^{\barC}(H^c) = \barbo_\ell(H)$.

 As $\calC \subset \barC$ we clearly have $\barbo_\ell(H) = c_\ell^{\barC}(H^c) \leq c_\ell^{\calC}(H^c) = \bo_\ell(G)$.
 Finally, consider any $s$-local $t$-global $\barC$-cover $\varphi : G_1 \cupdot \cdots \cupdot G_t \to H^c$.
 For $i = 1,\ldots,t$ we have $G_i \in \barC$ and hence $G_i$ is the vertex-disjoint union of some graphs in $\calC$.
 Thus we can interpret $\varphi$ as an $s$-local $t'$-global $\calC$-cover of $H^c$ for some $t' \geq t$.
 This shows that $\bo_\ell(H) = c_\ell^{\calC}(H^c) \leq c_\ell^{\barC}(H^c) = \barbo_\ell(H)$ and thus concludes the proof.
\end{proof}

%%%%%%%%%%%%%%%%%%%%%%%%%%%%%%%
%%        SEPARATIONS        %%
%%%%%%%%%%%%%%%%%%%%%%%%%%%%%%%
\section{Separating the Variants}\label{sec:separation}

% \begin{theorem}\label{thm:separation}
%  Each of the following holds.
%  \begin{enumerate}[label = (\roman*)]
%   \item For every positive integer $k$ there exists a graph $G_k$ with $\bo_\ell(G_k) = k$.\label{enum:local-high}
%   
%   \item For every positive integer $k$ there exists a graph $G_k$ with $\bo_\ell(G_k) = 2$ and $\barbo(G_k) = k$.\label{enum:local-smaller-union}
%   
%   \item For every positive integer $k$ there exists a graph $G_k$ with $\barbo(G_k) = 1$ and $\bo(G_k) = k$.\label{enum:union-smaller-global}
%  \end{enumerate}
% \end{theorem}
% 
\begin{proof}[Proof of Theorem~\ref{thm:separation}]{\ }
 \begin{enumerate}[label = (\roman*)]
 
%%%%%%%%%%%%%%%%%%%%%%%%%%%%%%%%%%
% $G_k$ with $\bo_\ell(G_k) = k$ %
%%%%%%%%%%%%%%%%%%%%%%%%%%%%%%%%%%
  \item For a fixed integer $k \geq 1$ we consider any graph $F_k$
    that is $2k$-regular and has girth at least~$6$ (i.e., its
    shortest cycle has length at least~$6$).
   Now let $\varphi$ be an injective $s$-local $\calC$-cover of $F_k$, i.e., a cover of $E(F_k)$ with $t$ co-interval graphs $G_1,\ldots,G_t \subseteq F_k$ for some $t \in \N$ such that every vertex of $F_k$ is contained in at most $s$ such $G_i$.
   We shall show that $s \geq k$, proving that $c_\ell^\calC(F_k) \geq k$ and hence $\bo_\ell(H_k) \geq k$, where $H_k = F_k^c$ denotes the complement of $F_k$.
   
   A co-interval graph $G$ does not contain any induced matching on two edges.
   Hence $G$ does not contain any induced cycle of length at least~$6$.
   (Moreover, as $G$ is perfect, it also contains no induced cycles of length~$5$.)
   Since $F_k$ has girth at least~$6$, this implies that every subgraph of $F_k$ that is a co-interval graph is a forest.
   In particular, every $G_i$ has average degree less than~$2$, i.e., $\sum_{v \in V(G_i)}\deg_{G_i}(v) < 2|V(G_i)|$.
   We conclude that
   \begin{align*}
    2k \cdot |V(F_k)| &= \sum_{v \in V(F_k)} \deg_{F_k}(v) \leq \sum_{v \in V(F_k)} \sum_{\substack{i \in [t]\\v \in V(G_i)}} \deg_{G_i}(v)\\
     &= \sum_{i = 1}^t \sum_{v \in V(G_i)} \deg_{G_i}(v) < \sum_{i=1}^t 2|V(G_i)| \leq 2s \cdot |V(F_k)|,
   \end{align*}
   where the first inequality holds since every edge of $F_k$ is covered and the last inequality holds since every vertex is contained in at most $s$ of the $G_i$, $i = 1,\ldots,t$.
   From the above it follows that $s \geq k$, as desired.
  
%%%%%%%%%%%%%%%%%%%%%%%%%%%%%%%%%%%%%%%%%%%%%%%%%%%%%%%%%%%
% $G'_k$ with $\bo_\ell(G'_k) = 2$ and $\barbo(G'_k) = k$ %
%%%%%%%%%%%%%%%%%%%%%%%%%%%%%%%%%%%%%%%%%%%%%%%%%%%%%%%%%%%
  \item Our proof follows the ideas of Milans~\textit{et al.}~\cite{MSW-15}, who consider $L(K_n)$, the line graph of $K_n$, and prove that $c_g^\calI(L(K_n)) \to \infty$ for $n \to \infty$, while $c_\ell^\calI(L(K_n)) = 2$ for every $n \in \N$, where $\calI$ denotes the class of all interval graphs.
   However, instead of using the ordered Ramsey numbers (which is also possible in our case) we shall rather use the following hypergraph Ramsey numbers:
   Let $K_n^3$, $n \in \N$, denote the complete $3$-uniform hypergraph on $n$ vertices, i.e., $K_n^3 = ([n],\binom{[n]}{3})$.
   For an integer $k \geq 1$, the Ramsey number $R_k(K^3_6)$ is the smallest integer $n$ such that every coloring of the hyperedges of $K_n^3$ with $k$ colors contains a monochromatic copy of $K_6^3$.
   The hypergraph Ramsey theorem implies that $R_k(K^3_6)$ exists for every $k$~\cite{Ram-30}.
  
   Now for fixed $k \geq 1$, choose an integer $n = n(k) > R_k(K^3_6)$ and consider $L(K_n)$, the line graph of $K_n$.
   Let $\varphi$ be any injective $t$-global $\barC$-cover of $L(K_n)$ with co-interval graphs $G_1,\ldots,G_t \subseteq L(K_n)$ for some $t \in \N$.
   We shall show that $t > k$, proving that $c_g^{\barC}(L(K_n)) > k$ and hence $\barbo(H'_k) > k$, where $H'_k = (L(K_n))^c$ denotes the complement of $L(K_n)$.
   
   Assume for the sake of contradiction that $t \leq k$.
   From the $\barC$-cover $\varphi$ of $L(K_n)$, we define a coloring $c$ of $E(K_n^3)$ with $t$ colors.
   Given $x,y,z \in [n]$ with $x<y<z$, let $c(x,y,z) = \min\{i \in [t] \mid \{xy,yz\} \in E(G_i)\}$ be the smallest index of a co-interval graph in $\{G_1,\ldots,G_t\}$ that covers the edge between $xy$ and $yz$ in $L(K_n)$.
   Since $n > R_k(K^3_6) \geq R_t(K^3_6)$ under $c$ there is a monochromatic copy of $K_6^3$, say it is in color $i$ and that its vertices are $\{x_1,\ldots,x_6\}$.
   This means that $G_i$ has a connected component containing $x_1,\ldots,x_6$ and in particular the edges $\{x_1x_2,x_2x_3\}$ and $\{x_4x_5,x_5x_6\}$ of $L(K_n)$.
   However, these two edges induce a matching in $L(K_n)$ and hence also in that connected component of $G_i$.
   This is a contradiction to that component being a co-interval graph, and thus implies that $t > k$, as desired.

   Finally, observe that for any $n \in \N$ the following is an injective $2$-local $\calC$-cover of $L(K_n)$:
   For each $i \in [n]$ let $G_i$ be the clique in $L(K_n)$ formed by all edges incident to vertex $i$ of $K_n$.
   Then $\{G_1,\ldots,G_n\}$ is a set of $n$ co-interval graphs in $L(K_n)$ with the property that every edge of $L(K_n)$ lies in exactly one $G_i$ and every vertex of $L(K_n)$ lies in exactly two $G_i$.
   This shows that $c_\ell^{\calC}(L(K_n)) = \bo_\ell(H'_k) \leq 2$.
  
%%%%%%%%%%%%%%%%%%%%%%%%%%%%%%%%%%%%%%%%%%%%%%%%%%%%%%%
% $G''_k$ with $\barbo(G''_k) = 1$ and $\bo(G''_k) = k$
%%%%%%%%%%%%%%%%%%%%%%%%%%%%%%%%%%%%%%%%%%%%%%%%%%%%%%%
  \item For fixed $k \geq 1$ consider $M_k$ the matching on $k$ edges.
   We shall show that $c_g^{\barC}(M_k) = 1$ and $c_g^{\calC}(M_k) = k$, proving that $\barbo(H''_k)=1$ and $\bo(H''_k)=k$, where $H''_k = M_k^c$ is the complement of $M_k$.
   Indeed, as every co-interval graph has at most one component containing an edge, any $\calC$-cover of $M_k$ contains at least $k$ co-interval graphs to cover all $k$ components of $M_k$.
   Since $K_2$ is a co-interval graph, there actually is an injective $k$-global $\calC$-cover of $M_k$.
   Thus, we have $c_g^{\calC}(M_k)=\bo(H''_k)=k$.
   
   On the other hand, the class $\barC$ is union-closed and, since $K_2$ is a co-interval graph, $\barC$ contains all matchings.
   In particular $M_k \in \barC$ and therefore we have $c_g^{\barC}(M_k) = \barbo(H''_k)=1$.\qedhere
 \end{enumerate}
\end{proof}

%%%%%%%%%%%%%%%%%%%%%%%%%%%%%%%
%% GEOMETRIC INTERPRETATIONS %%
%%%%%%%%%%%%%%%%%%%%%%%%%%%%%%%
\section{Geometric Interpretations}\label{sec:geometry}

% \begin{lemma}\label{lem:union-interpretation}
%  A graph $G$ is the vertex-disjoint union of $d$ co-interval graphs if and only if $G$ is the intersection graph of $1$-local $d$-dimensional boxes.
% \end{lemma}
% 

\begin{lemma}\label{lem:union-interpretation}
 A graph $H$ is the intersection graph of $1$-local $d$-dimensional boxes if and only if $H^c$ is the vertex-disjoint union of $d$ co-interval graphs.
\end{lemma}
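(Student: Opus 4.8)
### Proof proposal for Lemma~\ref{lem:union-interpretation}

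The plan is to prove both directions by directly translating between a box representation and a cover of $H^c$, using the fact that a $1$-local $d$-dimensional box has exactly one (or zero) coordinate in which it is ``non-trivial''. The guiding observation is this: if $B(v) = I_1(v) \times \cdots \times I_d(v)$ with all but at most one $I_i(v)$ equal to $\R$, then we may assign to each vertex $v$ a \emph{label} $\lambda(v) \in \{1,\ldots,d\}$ recording the coordinate in which $B(v)$ is (possibly) non-trivial; if $B(v) = \R^d$ we assign an arbitrary label. Two boxes $B(v), B(w)$ fail to intersect precisely when there is some coordinate $i$ with $I_i(v) \cap I_i(w) = \emptyset$, and this forces $\lambda(v) = \lambda(w) = i$. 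So non-edges of $H$ (equivalently, edges of $H^c$) only occur between vertices with the same label.

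First I would prove the forward direction. Given a $1$-local $d$-dimensional box representation of $H$, define the labels $\lambda(v)$ as above and let $V_i = \lambda^{-1}(i)$ for $i \in \{1,\ldots,d\}$, so $V(H) = V_1 \cupdot \cdots \cupdot V_d$. By the observation, every edge of $H^c$ lies within some single $V_i$, so $H^c = H^c[V_1] \cupdot \cdots \cupdot H^c[V_d]$ as a vertex-disjoint union (after adding isolated vertices to pad out the vertex set, as in the $\bar{G}$ convention from the introduction — though here the pieces are genuinely disjoint in the relevant coordinate). It remains to check each $H^c[V_i]$ is a co-interval graph: for $v \in V_i$ the interval $I_i(v)$ is defined, and for $v,w \in V_i$ we have $vw \in E(H^c[V_i])$ iff $B(v) \cap B(w) = \emptyset$ iff (since all other coordinates are $\R$) $I_i(v) \cap I_i(w) = \emptyset$. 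Thus $\{I_i(v) \mid v \in V_i\}$ is an interval representation of the \emph{complement} of $H^c[V_i]$, i.e.\ $H^c[V_i] \in \calC$. Hence $H^c$ is the vertex-disjoint union of $d$ co-interval graphs.

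For the converse, suppose $H^c = C_1 \cupdot \cdots \cupdot C_d$ with each $C_i \in \calC$, and let $V_i = V(C_i)$, so the $V_i$ partition $V(H)$. For each $i$ fix an interval representation $\{J_i(v) \mid v \in V_i\}$ of $C_i^c$ (the interval graph whose complement is $C_i$), and without loss of generality translate these so the representations for distinct $i$ use disjoint regions of $\R$ is \emph{not} needed — instead, for $v \in V_i$ set $B(v) = \R \times \cdots \times \R \times J_i(v) \times \R \times \cdots \times \R$ with $J_i(v)$ in coordinate $i$. Each $B(v)$ is $1$-local $d$-dimensional. For $v \in V_i$, $w \in V_j$: if $i \neq j$ then $B(v) \cap B(w) \neq \emptyset$ (they are trivial in each other's active coordinate), and indeed $vw \notin E(H^c)$ since $v,w$ lie in different components of $H^c$, so $vw \in E(H)$ — consistent. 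If $i = j$ then $B(v) \cap B(w) = \emptyset$ iff $J_i(v) \cap J_i(w) = \emptyset$ iff $vw \notin E(C_i^c)$ iff $vw \in E(C_i) \subseteq E(H^c)$, i.e.\ $vw \notin E(H)$ — again consistent. So the boxes $\{B(v)\}$ form an intersection representation of $H$.

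I do not expect a serious obstacle here; the lemma is essentially a bookkeeping argument. The one point requiring a little care is the bijection between ``active coordinate'' of a box and ``component index'' of $H^c$, and in particular handling vertices whose box is all of $\R^d$ (isolated vertices of $H^c$): these may be placed in any component / assigned any label, which is harmless precisely because a full-space box intersects everything. I would state this explicitly to keep the vertex-disjoint-union decomposition well-defined.
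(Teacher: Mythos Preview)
Your proof is correct and follows essentially the same approach as the paper: partition $V(H)$ according to the unique ``active'' coordinate of each $1$-local box, observe that non-edges of $H$ can only occur within a part, and read off a co-interval graph on each part from the intervals in that coordinate (and conversely). The only cosmetic differences are that the paper phrases the forward direction via $H = \bar{G}_1 \cap \cdots \cap \bar{G}_d$ and De~Morgan rather than a direct case check, and handles $B(v)=\R^d$ by a ``without loss of generality'' reduction instead of your arbitrary-label convention.
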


\begin{figure}[tb]
  \centering
  \includegraphics[page=1]{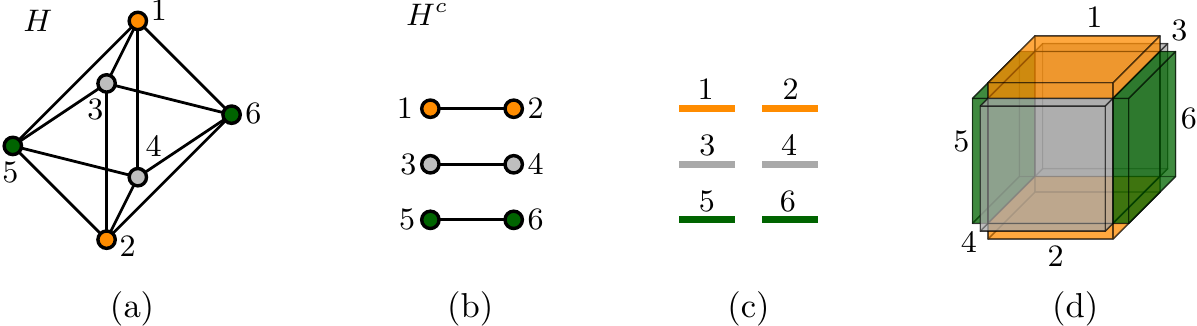}
  \caption{(a)~The octahedron $H$.  (b)~Its complement $H^c$.
    (c)~$H^c$ as the vertex-disjoint union of three co-interval graphs (given
    in their interval representation).  (d)~The corresponding
    intersection representation of $H$ with 1-local 3-dimensional
    boxes.  The two long sides of each box have actually infinite
    length.}
  \label{fig:geometric-interpretation}
\end{figure}

\begin{proof}%[Proof of Lemma~\ref{lem:union-interpretation}]
  For an illustration of the proof, see
  Figure~\ref{fig:geometric-interpretation}.  First, if
  $\{B(v) \mid v \in V(H)\}$ is an intersection representation of $H$
  with $1$-local boxes in $\R^d$, then for each $v \in V(H)$ let
  $B(v) = I_1(v) \times \cdots \times I_d(v)$.
  Without loss of generality assume that for every $v \in V(H)$ there is some coordinate $i \in [d]$ for which $I_i(v) \neq \R$.
  For each $i \in [d]$ consider the set
  $V_i = \{v \in V(H) \mid I_i(v) \neq \R\}$ of those vertices $v$ for
  which $B(v)$ is bounded in the $i^{\text{th}}$ coordinate. 
%   Note that possibly for a universal vertex $v$ in $H$ we have $I_i(v) = \R$ for all $i \in [d]$.
  Then $V_1, \ldots, V_d$ is a
  partition of $V(H)$ and for each $i \in [d]$ the set
  $\{I_i(v) \mid v \in V_i\}$ is an intersection representation with
  intervals of some graph $G_i$ with vertex set $V_i$.
%   Let $G'_i$ be the graph obtained from $G_i$ by adding all vertices in $V(H) - V_i$ as universal vertices.
  Then we have $H = \bar{G}_1 \cap \cdots \cap \bar{G}_d$ and hence
  $H^c = \bar{G}_1^c \cup \cdots \cup \bar{G}_d^c = G_1^c \cupdot \cdots \cupdot G_d^c$.
  Thus $H^c$ is the vertex-disjoint union of the $d$ co-interval graphs, as desired.
%   $G_1^c,\ldots,G_d^c$, as desired.

 \medskip
 
 Now let $H^c = G^c_1 \cupdot \cdots \cupdot G^c_d$, where $G^c_i \in \calC$ for $i = 1,\ldots,d$.
 Consider for each $i$ an intersection representation $\{I_i(v) \mid v \in V(G_i)\}$ of the complement $G_i$ of $G_i^c$ with intervals.
 For $v \in V(H)$ we define 
 \[
  I'_i(v) = \begin{cases} I_i(v), & \text{ if } v \in V(G_i)\\
            \R, & \text{ if } v \notin V(G_i).
           \end{cases}
 \]
 Then $B(v) = I'_1(v) \times \cdots \times I'_d(v)$ is a $1$-local $d$-dimensional box.
 Moreover, $\{B(v) \mid v \in V(H)\}$ is an intersection representation of $H$, which concludes the proof.
\end{proof}

From Lemma~\ref{lem:union-interpretation} we easily derive Theorem~\ref{thm:geometric-interpretation}, i.e., the geometric intersection representations characterizing the local and union boxicity, respectively.

\begin{figure}[tb]
  \centering
  \includegraphics[page=2]{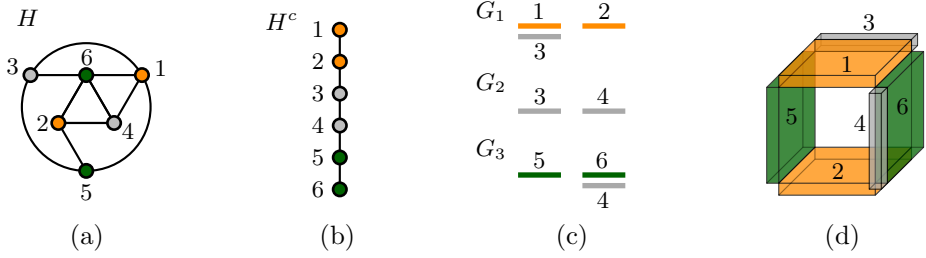}
  \caption{(a, b) A graph $H$ and its complement $H^c$.  (c)~$H^c$ can
    be covered using three co-interval graphs.  (d)~The resulting
    intersection representation.  Note that the boxes are
    3-dimensional as the cover uses three co-interval graphs and the
    boxes are 1-local and 2-local if the corresponding vertices are
    covered once ($1, 2, 5, 6$) and twice ($3, 4$), respectively.
    The long sides of each box have actually infinite
    length.}
  \label{fig:geometric-representation-2}
\end{figure}

% \begin{theorem}\label{thm:geometric-interpretation}
%  Let $G$ be a graph.
%  \begin{enumerate}[label = (\roman*)]
%   \item We have $\barbo(G) \leq k$ if and only if there exist $d_1,\ldots,d_k$ such that $G$ is the intersection graph of Cartesian products of $k$ boxes, where the $i^{\text{th}}$ box is $1$-local $d_i$-dimensional, $i = 1,\ldots,k$.\label{enum:interpretation-union}
%   
%   \item Moreover, we have $\bo_\ell(G) \leq k$ if and only if there exists $d$ such that $G$ is the intersection graph of $k$-local $d$-dimensional boxes.\label{enum:interpretation-local}
%  \end{enumerate}
% \end{theorem}
% 
\begin{proof}[Proof of Theorem~\ref{thm:geometric-interpretation}]{\ }
 \begin{enumerate}[label = (\roman*)]
  \item This follows easily from Lemma~\ref{lem:union-interpretation}.
   Indeed, if $\barbo(H) = c_g^{\barC}(H^c) \leq k$, then $H^c = G_1 \cup \cdots \cup G_k$ where for $i = 1,\ldots,k$ the graph $G_i \in \barC$ is the vertex-disjoint union of some $d_i$ co-interval graphs.
   By Lemma~\ref{lem:union-interpretation} $G_i^c$ has an intersection representation with $1$-local $d_i$-dimensional boxes.
   Similarly to the proof of Lemma~\ref{lem:union-interpretation}, extending this $1$-local box representation of $G_i^c$ to all vertices of $H$ by adding a box $\R^{d_i}$ for each vertex in $H - G_i$, and taking the Cartesian product of these $k$ extended $1$-local box representations, we obtain an intersection representation of $H$ of the desired kind.
   
   Similarly, consider any intersection representation $\{B_1(v) \times \cdots \times B_k(v) \mid v \in V(H)\}$ of $H$, where for every $v \in V(H)$ and every $i \in [k]$ the box $B_i(v)$ is $d_i$-dimensional and $1$-local.
   Then by Lemma~\ref{lem:union-interpretation} the set $\{B_i(v) \mid v \in V(H)\}$ is an intersection representation of some graph $G_i$ whose complement $G_i^c$ is in $\barC$.
   Moreover, $H^c$ is the union of these $k$ graph $G_1^c,\ldots,G_k^c \in \barC$.
   This gives $\barbo(H) = c_g^{\barC}(H^c) \leq k$, as desired.
 
 \item For an example illustrating this case, see
   Figure~\ref{fig:geometric-representation-2}.  If
   $\bo_\ell(H) = c_\ell^{\calC}(H^c) \leq k$, then there is a set
   $\{G_1,\ldots,G_t\}$ of $t$ co-interval graphs such that
   $G_i \subseteq H^c$ for $i=1,\ldots,t$,
   $E(H^c) = E(G_1) \cup \cdots \cup E(G_t)$ and every $v \in V(H^c)$
   is contained in at most $k$ such $G_i$, $i = 1,\ldots,t$.  For each
   $i \in [t]$ consider an interval representation
   $\{I_i(v) \mid v \in V(G_i)\}$ of $G_i^c$.  For $v \in H - G_i$ we
   set $I_i(v) = \R$.  Note that $\{I_i(v) \mid v\in V(H)\}$ is an
   interval representation of $\bar{G}_i^c$.
%   This means for every colour we have an interval representation in which without loss of generality every vertex that is isolated with regards to this colour has interval $\R$. 
  
%   The left statement $\cbox_l(H)\le d$ is equivalent to $c_l^{\Icd}(H^c)\le d$ and means there is a disjoint union of cointerval graphs and an injective homomor\-phism from each of them to $H^c$ such that every edge of $H^c$ is covered by one of these homomor\-phisms and every vertex of $H^c$ is covered at most $d$ times. 

  Now for $v \in V(G)$ let $B(v) = I_1(v) \times \cdots \times I_t(v)$ be the Cartesian product of the $t$ intervals associated with vertex $v$.
  As $v$ is in $G_i$ for at most $k$ indices $i \in [t]$, $I_i(v) \neq \R$ for at most $k$ indices $i \in [t]$.
  In other words, $B(v)$ is a $k$-local box.
  Finally, we claim that $\{B(v) \mid v \in V(H)\}$ is an intersection representation of $H$.
  Indeed, if $vw \notin E(H)$, then $vw \in E(H^c)$ and hence $vw \in E(G_i)$ for at least one $i \in [t]$.
  Then $I_i(v) \cap I_i(w) = \emptyset$ and thus $B(v) \cap B(w) = \emptyset$.
  And if $vw \in E(H)$, then $vw \notin E(H^c)$ and $vw \notin E(G'_i)$ for every $i \in [t]$.
  Thus $I_i(v) \cap I_i(w) \neq \emptyset$ for every $i \in [t]$ and hence $B(v) \cap B(w) \neq \emptyset$.
  
  This shows that if $\bo_\ell(H) \leq k$, then $H$ is the intersection graph of $k$-local boxes.
  On the other hand, if $H$ admits an intersection representation with $k$-local $t$-dimensional boxes, then for each $i \in [t]$ projecting the boxes to coordinate $i$ and considering the bounded intervals in this projection gives an interval representation of some subgraph $G_i$ of $H^c$.
  As before, we can check that $\{G_1,\ldots,G_t\}$ forms an injective $k$-local $\calC$-cover of $H^c$, showing that $\bo_\ell(H) = c_\ell^{\calC}(H^c) \leq k$.\qedhere
 \end{enumerate} 
\end{proof}

\section{Conclusions}\label{sec:conclusions}

In this paper we have introduced the notions of the local boxicity $\bo_\ell(H)$ and union boxicity $\barbo(H)$ of a graph $H$.
It holds that $\bo_\ell(H) \leq \barbo(H) \leq \bo(H)$, where $\bo(H)$ denotes the classical boxicity as introduced  almost 50 years ago.
Indeed, both new parameters are a better measure of the complexity of $H$.
For example, if $H$ is the complement of a matching on $n$ edges, then $\bo(H) = n$, simply because the $n$ non-edges each have to be realized in a different dimension.
On the other hand, we have $\bo_\ell(H) = \barbo(H) = 1$, and as these non-edges are vertex-disjoint, they also should be ``counted only once''.
We have shown this phenomenon in a few more examples in the course of the paper.
In fact, in many box representations from the literature many (if not all) dimensions are only used by few vertices.
The resulting high boxicity may be misintepreted as the graph being very complex, which could be avoided by using local or union boxicity.

In future research, established boxicity results should be revisited to see whether one can improve the upper bounds using local or union boxicity.
For example, it is known that if $H$ is a planar graph, then
$\bo(H) \leq 3$~\cite{Tho-86}. Moreover, the octahedral graph $O$ is
planar and has boxicity~$3$, because its complement $O^c$ is the
matching on three edges (c.f. the proof of
Theorem~\ref{thm:separation}~\ref{enum:union-smaller-global} and
Figure~\ref{fig:geometric-interpretation}).
By~\eqref{eq:inequalities} we have that
$\bo_\ell(H) \leq \barbo(H) \leq 3$ whenever $H$ is planar.  However,
$\bo_\ell(O) = \barbo(O) = 1$, because $O^c$ is the vertex-disjoint
union of co-interval graphs, i.e., $O^c \in \barC$.  Hence it is
natural to ask the following.

\begin{question}
 Is there a planar graph $H$ with $\bo_\ell(H) = 3$?
\end{question}

For general graphs $H$ we proved that the local boxicity $\bo_\ell(H)$ and the union boxicity $\barbo(H)$ can be arbitrarily far from the classical boxicity $\bo(H)$.
But we do not know whether if $\bo(H)$ is large, then $\bo_\ell(H)$ and $\barbo(H)$ can be very \emph{close} to $\bo(H)$.
We construct graphs in the proof of Theorem~\ref{thm:separation}~\ref{enum:local-high} with large local boxicity, but one can show that these have even larger boxicity.

\begin{question}
 Is there for every $k \in \N$ a graph $H_k$ such that $\bo_\ell(H_k) = \barbo(H_k) = \bo(H_k) = k$?
\end{question}

Another interesting research direction concerns the computational complexity.
It is known that for every $k \geq 2$ deciding whether a given graph $H$ satisfies $\bo(H) \leq k$ is NP-complete~\cite{Coz-81,Kra-94}.
For $k = 1$ we have $\bo(H) \leq k$ if and only if $H$ is an interval graph, and $\barbo(H) \leq k$ (equivalently $\bo_\ell(H) \leq k$) if and only if the complement of $H$ is the vertex-disjoint union of co-interval graphs, both of which can be tested in polynomial time via interval graph recognition~\cite{Boo-76}.

\begin{question}
 For $k \geq 2$, is it NP-complete to decide whether $\bo_\ell(H) \leq k$ (or $\barbo(H) \leq k$) for a given graph $H$?
\end{question}

Let us remark that for general covering numbers the computational complexity of computing $c_g^{\calG}(H)$ tends to be harder than that of $c_\ell^{\calG}(H)$, which in turn tends to be harder than for $c_f^{\calG}(H)$.
For example, for $\calG$ being the class of star forests, computing $c_g^{\calG}(H)$ is NP-complete~\cite{Gon-09,Hak-96}, while computing $c_\ell^{\calG}(H)$ and $c_f^{\calG}(H)$ is polynomial-time solvable~\cite{KU16}.
The same holds when $\calG$ is the class of all matchings as discussed in~\cite{KU16}.
And for $\calG$ being the class of bipartite graphs, computing $c_g^{\calG}(H)$ and $c_\ell^{\calG}(H)$ is NP-complete~\cite{Fis-96}, while computing $c_f^{\calG}(H)$ is polynomial-time solvable since $c_f^{\calG}(H) = 1$ if $H$ is bipartite and $c_f^{\calG}(H) = 2$ otherwise.

\bibliography{lit}
\bibliographystyle{amsplain}

\end{document}